\DeclareMathOperator{\e}{\mathrm{e}}
\DeclareMathOperator{\dd}{\mathrm{d\!}}
\DeclareMathOperator{\Int}{\mathrm{Int}}
\newcommand{\N}{\varmathbb{N}}
\newcommand{\R}{\varmathbb{R}}
\newcommand{\Cb}{\mathcal{C}}
\newcommand*{\kryt}{\textrm{cr}}
\newtheorem{thm}{Theorem}[section]
\newtheorem{prop}[thm]{Proposition}
\newtheorem{cor}[thm]{Corollary}
\newtheorem{rem}[thm]{Remark}
\newtheorem{defs}[thm]{Definition}
\newenvironment{proof}[1][]{\par\noindent\textbf{Proof #1: }}{\hfill\rule{1.3ex}{1.3ex}\par}
\numberwithin{equation}{section}
\begin{document}
\begin{center}{\Large\bf 
General model of a~cascade of reactions with time delays: global stability analysis}
\end{center}

\medskip{}
\begin{center}{\large%
Marek Bodnar}

\medskip{}
$^1$ Institute of Applied Mathematics and Mechanics,\\
      University of Warsaw, Banacha 2, 02-097 Warsaw, Poland.\\
		\texttt{mbodnar@mimuw.edu.pl}\\[0.8ex]
\end{center}

\bigskip{}
%\begin{frontmatter}
%
%
%\title{Delay induced oscillations in gene expression of Hes1 protein model}
%
%\author[uw]{Marek Bodnar\corref{cor1}}
%\ead{mbodnar@mimuw.edu.pl}
%
%\cortext[cor1]{Corresponding author}
%
%\address[uw]{Institute of Applied Mathematics and Mechanics,
%        University of Warsaw, Banacha 2, 02-097 Warsaw, Poland.}

%\begin{keyword} 
%biochemical reaction \sep delay differential equations \sep asymptotic analysis \sep
%Hopf bifurcation \sep Hill coefficient \sep Hes1 protein
%
%\MSC 34C55 \sep 34C60 \sep 34D05 \sep 34K20 \sep 34K28 \sep 34K60 \sep 37N25
%\end{keyword}
%
%\end{frontmatter}

\begin{abstract}
The problem considered in this paper consists of a~cascade of reactions with discrete as well as distributed delays, which arose in the context of Hes1   gene expression. For the abstract general model sufficient conditions for global stability are presented. Then the abstract result is applied to the Hes1 model. 
\end{abstract}

\textbf{Keywords: } delay differential equations, global stability, biochemical reaction

\section{Introduction}

The paper is motivated by the problem of global stability of a~positive steady state
in the system of delay differential equations that describes gene expression of Hes1 protein. 
A~scheme of biochemical reactions connected with this process is shown in the left-hand side 
panel of Fig.~\ref{fig:hes1}. The mathematical model proposed by Monk~\cite{monk03currbiol} 
consists of two ordinary differential equations with time delays that reflect 
protein production time and mRNA~transcription. Local stability of the positive steady state of the model
was extensively studied in literature (eg.~\cite{bernard06philtrans,Zhuang10ijcms,mbab12nonrwa,Erneux09} and references therein). It is known, that  
if degradation rates of Hes1 protein and its mRNA~are sufficiently large, the positive steady state is 
locally asymptotically stable and stability does not depend on time delay (see~\cite{bernard06philtrans,mbab12nonrwa}). 
On the other hand, for other values of degradation rates of Hes1 protein and its mRNA, the stability depends on
the sum of delays in transcription of mRNA~and protein's production and if the delay exceeds some critical 
value, Hopf bifurcation occurs (see~\cite{bernard06philtrans,mbab12nonrwa}). In~\cite{mbab12nonrwa} a~direction 
of the bifurcation was studied, and conditions guaranteeing existence of the supercritical
bifurcation was found. However, we are not aware of any results that address the question of global 
stability of the steady state of this system. In this paper we study 
 global stability of the positive steady state in the Hes1 gene expression model and we generalise the result for similar systems as those shown  in the right-hand side of Fig.~\ref{fig:hes1}. This kind of 
systems can be also considered as a~simple signalling pathway. Gene expression models as well as signalling 
pathways are involved in many more complicated biological phenomena, including carcinogenesis (see 
eg.~\cite{jmjpmbuf11bmb,mbufjp11jmaa,hasty2001nrg,Jong2002jcb,Cerone2012plos} and references therian).
We hope the results as well as the methods presented in the paper could be used to various, even more complicated models, and also to tumour growth models. 

\begin{figure}[bht]
\centerline{
\begin{tabular}{cp{2em}c}%{m{0.45\textwidth}m{0.45\textwidth}}
\begin{tikzpicture}[>=stealth,yscale=0.6,xscale=0.55]
	\node (DNA) at (0,0)  {DNA};
	\node (mRNA) at (4,0) {mRNA};
	\node (Hes1) at (4,-6) {Hes1};
   \node (degrna) at (8,0) {$\emptyset$};
   \node (deghes) at (8,-6) {$\emptyset$};
	\begin{scope}[semithick]
		\draw[->,shorten >=2pt, double] (DNA) to node[above] {$\gamma_r$} (mRNA) ;
      \draw[->,shorten >=2pt] (mRNA) to node[above] {$k_r$} (degrna);
		\draw[->,shorten >=2pt, double] (mRNA) to node[right] {$\gamma_p$} (Hes1);
      \draw[->,shorten >=2pt] (Hes1) to node[above] {$k_p$} (deghes);
		\draw[-|,shorten >=2pt,bend left=30] (Hes1) to node[pos=0.7,right] {$f(\textrm{Hes1})$} (DNA);
	\end{scope}
\end{tikzpicture}& &
\begin{tikzpicture}[>=stealth,,yscale=0.6,xscale=0.55]
  	\begin{scope}[semithick, shorten >=2pt]
		\node (X0) at (0,0)  {$A$};
		\node (X1) at (4,0) {$X_1$};
		\node (X2) at (4,-2) {$X_2$};
   	\node (X3) at (4,-4) {$X_3$};
		\node (Xk) at (4,-6) {$X_k$};
  		\node (X1deg) at (8,0) {$\emptyset$};
   	\node (X2deg) at (8,-2) {$\emptyset$};
   	\node (X3deg) at (8,-4) {$\emptyset$};
   	\node (Xkdeg) at (8,-6) {$\emptyset$};
		\draw[->, double] (X0) to node[above] {$\alpha_1$} (X1);
      \draw[->] (X1) to node[above] {$\mu_1$} (X1deg);
		\draw[->, double] (X1) to node[pos=0.4,right] {$\alpha_2$} (X2);
      \draw[->] (X2) to node[above] {$\mu_2$} (X2deg);
		\draw[->, double] (X2) to node[pos=0.4,right] {$\alpha_3$} (X3);
      \draw[->] (X3) to node[above] {$\mu_3$} (X3deg);
		\draw[->, double,dotted] (X3) to node[pos=0.4,right] {$\alpha_k$} (Xk);
      \draw[->] (Xk) to node[above] {$\mu_k$} (Xkdeg);
		\draw[->,shorten >=2pt,bend left=30] (Xk) to node[pos=0.7,right] {$f(X_k)$} (X0);
	\end{scope}
\end{tikzpicture} \\
(a) & & (b)
\end{tabular}
}
\caption{(a) Sketch of negative feedback loop for the Hes1 system.
     (b) Sketch of simple signalling pathway with feedback.\label{fig:hes1}}
\end{figure}
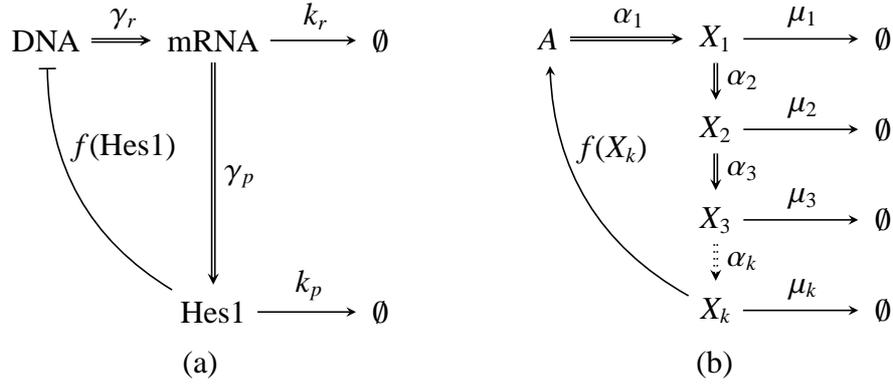

Standard tool used in proving global stability is the method of Liapunov functionals. However, 
although the method is well known, the construction of suitable functional is 
usually a~``bottle neck''. Recently, Liz and Ruiz-Herrera~\cite{Li2010} proposed the method for 
proving global stability of the steady state of delay differential equations by investigating 
the asymptotic behaviour of some corresponding discrete dynamical system. The method was developed 
for Hopfield's model of neural networks. Here, we adapt this method to other type of 
delay differential equations that arises from the model of Hes1 gene expression. The main idea 
of the Liz-Ruiz-Herrera method is to determine global stability of the trivial steady state 
of the equation
\begin{equation}\label{liz}
	\dot x_j(t) = -x_j(t) + F_j\bigl(x_1(t-\tau_{j1}),x_2(t-\tau_{2j}),\dotsc,
			x_k(t-\tau_{jk})\bigr), \quad j=1,2,\dotsc,k,
\end{equation}
where $F\colon \R^k\to\R^k$, $\tau_{j\ell}\ge 0$, 
by assuming suitable asymptotic properties for the discrete system
\begin{equation}\label{liz:dys}
	y_j(n+1) = F_j(y(n)), \quad y=(y_1,\dotsc,y_k), \quad j=1,2,\dotsc,k, \quad n=1,2,3,\dotsc.
\end{equation}
In fact, if we rewrite Eq.~\eqref{liz} as 
\[
	\varepsilon \dot x_j(t) = -x_j(t) + F_j\bigl(x_1(t-\tau_{j1}),x_2(t-\tau_{2j}),\dotsc,
			x_k(t-\tau_{jk})\bigr), \quad j=1,2,\dotsc,k,
\]
and let $\varepsilon\to 0$ we arrive at~\eqref{liz:dys}. Clearly, after time rescaling,
which is equivalent to set new delays $\tilde\tau_{j\ell} = \tau_{j\ell}/\varepsilon\to+\infty$
(see~\cite{MalletParet1986} for extensive study of one singularly perturbed differential
 equation), we may say the Liz-Ruiz-Herrera method investigates the behaviour of~\eqref{liz}
for very large delays. The main weak point of the method is a~strong assumption 
made on the discrete system, which require the steady state to be 
a strong attractor. We precise this notion in the next section in Definition~\ref{def:sa}. 
The second issue is the term ``$-x_j(t)$''. However, it can be easily overcome. 
If the term $-g_j(x_j(t))$ appears instead and $g_j$ is a~homeomorphism, it is enough to consider 
$g_j^{-1}\bigl(F_j(y)\bigr)$ in the right-hand side of the corresponding discrete system (see Remark~2.2 in~\cite{Liz2013jde}). The method works also for particular systems with distributed delays. 

Here we adapt the method proposed in~\cite{Liz2013jde} for the system that arises
in the context of gene expression models. We consider a~cascade of reactions with feedback as shown in the right-hand side panel of Fig.~\ref{fig:hes1}. We formulate 
a general theorem that shows, under suitable assumptions, global stability of the steady 
state of the system. The condition is independent of the magnitude of delay, 
and therefore is limited to the region in the parameter space, where  the steady state
is locally stable independently of the delay. Although the Liapunov functionals method can give 
stronger condition, this method is easier to apply. As an example, application of the 
theoretical result to the Hes1 gene expression model is given. 
%We are not aware of any global stability result connected with this type of systems. - TO~ZDANIE JUZ~BY£O, MOZE WYWALIÆ?

The paper is organised as follows. In Section~\ref{sec:main} we formulate an abstract 
framework and prove the general theorem. In Section~\ref{sec:appl} we use the 
results proved in Section~\ref{sec:appl} to the Hes1 gene expression 
model. We conclude the paper with a~short discussion.

\section{Abstract theory}\label{sec:main}

\subsection{Notation and model description}
Let $\tau\ge 0$ be an arbitrary real number. For an arbitrary set $\Omega\subset\R^k$, by 
$\Cb(\Omega)$ we denote the set of continuous 
functions defined on $[-\tau,0]$ with values in $\Omega$ with standard supremum norm.

We consider the following system of equations:
\begin{equation}\label{uklad}
	\begin{split}
		%\dot x_1(t) &= \int_{-\tau}^0 \theta_1(s) f_1(t,x_k(t+s)) \dd{}s - \mu_1 x_1(t) \\
		\dot x_j(t) &= \int_{-\tau}^0 \theta_j(s) f_j(t,x_{j-1}(t+s)) \dd{}s - \mu_j x_j(t), \quad j=1,2,\dotsc,k, \;\; 
							j\bmod k,\\
	\end{split}
\end{equation}
where the functions $f_j\colon \R\times\R \to \R$ are continuous fulfilling $f_j(t,0)=0$
for all $j=1,2,\dotsc,k$, $t\in\R$ and $\theta_j$, $j=1,2,\dotsc,k$, are
probabilistic measures, that is  $\theta_j(s)\ge 0$ and  $\int_{-\tau}^0\theta_j(s)ds = 1$. 
To close the system we consider continuous initial condition 
\begin{equation}\label{ic}
	x_j(t_0+s) = \phi_j(s), \quad s\in [-\tau,0], \quad 
	\phi_j\in\Cb(\R), \quad j=1,2,\dotsc,k.
\end{equation}

\begin{thm}\label{thm:ex}
	Assume that the functions $f_j\colon\R\times\R\to\R$ are continuous 
	and fulfil local Lipschitz condition with respect to the second variable. 
	Then for any initial function $\phi=(\phi_1,\dotsc,\phi_k)\in\Cb(\R^k)$
	and arbitrary $t_0\in\R$, 
	there exists a~unique solution to the problem~\eqref{uklad},~\eqref{ic}. 
	Moreover, if for each $j=1,2,\dotsc,k$ the alternative:
	the support of $\theta_j$ is separated from $0$ or $f_j$ is globally Lipschitz 
	continuous holds, then the solution to the problem~\eqref{uklad},~\eqref{ic} 
	is defined for all $t\ge 0$. 
\end{thm}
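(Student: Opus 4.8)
The plan is to recast~\eqref{uklad} as an abstract retarded functional differential equation, apply the classical local existence--uniqueness theorem, and then rule out finite-time blow-up by a Gronwall estimate combined with a method-of-steps argument. First I would write the system in the form $\dot x(t)=G(t,x_t)$, where $x_t(s)=x(t+s)$ for $s\in[-\tau,0]$ and $G=(G_1,\dots,G_k)$ with
\[
	G_j(t,\psi) = \int_{-\tau}^0 \theta_j(s)\, f_j\bigl(t,\psi_{j-1}(s)\bigr)\dd{}s - \mu_j\psi_j(0),
\]
the indices being cyclic so that $\psi_0=\psi_k$. Continuity of the $f_j$ gives continuity of $G$ in $(t,\psi)$. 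For the Lipschitz estimate, fix a bounded set of histories and a compact time interval; on the resulting range each $f_j$ is Lipschitz in its second argument with some constant $L_j$, and because $\theta_j$ is a probability measure
\[
	\int_{-\tau}^0 \theta_j(s)\,\bigl| f_j(t,\psi_{j-1}(s))-f_j(t,\tilde\psi_{j-1}(s))\bigr|\dd{}s \le L_j\,\|\psi_{j-1}-\tilde\psi_{j-1}\|,
\]
so, adding the linear term $\mu_j\psi_j(0)$, the functional $G$ is locally Lipschitz in $\psi$. The standard theory then yields a unique solution on a maximal interval $[t_0,T^*)$ together with the continuation principle: if $T^*<\infty$ then $|x(t)|$ must be unbounded as $t\to T^*$.

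It therefore suffices to bound the solution on every finite interval. Set $|x(t)|=\max_j|x_j(t)|$ and $V(t)=\sup_{t_0-\tau\le s\le t}|x(s)|$. Let $A$ collect the indices for which $f_j$ is globally Lipschitz---so that $|f_j(t,u)|=|f_j(t,u)-f_j(t,0)|\le L_j|u|$ by the hypothesis $f_j(t,0)=0$---and let $B$ collect those for which $\operatorname{supp}\theta_j\subset[-\tau,-\tau_j]$ for some $\tau_j>0$; by assumption $A\cup B=\{1,\dots,k\}$. If $B=\emptyset$ the linear bound gives, for every $j$, $|\dot x_j(t)|\le L_j V(t)+\mu_j|x_j(t)|\le(L_j+\mu_j)V(t)$, whence $V(t)\le V(t_0)+C\int_{t_0}^t V(s)\dd{}s$ and Gronwall's lemma bounds $V$ on each finite interval.

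If $B\neq\emptyset$, put $\tau^*=\min_{j\in B}\tau_j>0$ and proceed inductively over $I_m=[t_0+(m-1)\tau^*,\,t_0+m\tau^*]$. The crucial point is that for $j\in B$ and $t\in I_m$ the forcing term depends on $x_{j-1}$ only at times $t+s\le t-\tau_j\le t-\tau^*\le t_0+(m-1)\tau^*$, i.e.\ on $I_{m-1}$ and earlier (the initial data when $m=1$); there $x_{j-1}$ is already bounded by the inductive hypothesis, so continuity of $f_j$ makes this term a known bounded function on $I_m$. For $j\in A$ the linear bound applies as before. Hence on $I_m$ one has $|\dot x_j(t)|\le C_m+C'V(t)$ for all $j$, so $V(t)\le V(t_0+(m-1)\tau^*)+\int(C_m+C'V)\dd{}s$; Gronwall bounds $V$ on $I_m$, the continuation principle extends the solution beyond $I_m$, and induction covers all $t\ge t_0$.

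The step I expect to be the main obstacle is the mixed case $A\neq\emptyset\neq B$: one must interleave the Gronwall estimate, which controls the globally Lipschitz equations feeding back on current values, with the method-of-steps freezing that handles the separated-delay equations, all while respecting the cyclic coupling $x_0=x_k$. A minor technical point requiring care is the differential inequality for $|x_j|$ at instants where $x_j$ vanishes, which is best handled through the upper right Dini derivative or by integrating~\eqref{uklad} directly.
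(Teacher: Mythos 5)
Your proof is correct and takes essentially the same route as the paper's: local existence and uniqueness from the standard theory of retarded functional differential equations, then global existence by the method of steps over intervals of length $\tau_{\min{}}=\min_j\tau_j$, on which the equations with delay support separated from zero see only already-determined data while the globally Lipschitz equations cannot blow up. The ``main obstacle'' you anticipate in the mixed case is in fact harmless, and your explicit Gronwall estimate merely makes quantitative what the paper disposes of in one line --- on each step the $I_s$ equations become non-homogeneous linear ODEs and the $I_g$ equations have globally Lipschitz right-hand sides, so the solution extends across every step interval.
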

\begin{proof}
	Local existence follows from the standard existence theorem for delay differential equations 
	(DDEs);~\cite{Hale93}. 

	Define a~set of indexes $I_g\subset \{1,2,\dots,k\}$ such that for any $j\in I_g$
	the function $f_j$ is globally Lipschitz.
	
	If $I_g = \{1,2,\dotsc,k\}$, then global existence follows from the standard theory of DDEs; \cite{Hale93}. 

	Assume now, that $I_s=\{1,2,\dotsc,k\}\setminus I_g\neq\emptyset$. There exists $\tau_{\min{}}>0$ such that 
	supports of $\theta_{j}$ are included in $[-\tau,-\tau_{\min{}}]$ for all $j\in I_s$, that is for 
	all $j\in I_s$, $t\in\R$ and $\varphi\in \Cb(\R)$ we have 
	\[
		\int_{-\tau}^0 \theta(s)f(t,\phi(s))\dd s = \int_{-\tau}^{-\tau_{\min{}}} \theta(s)f(t,\phi(s))\dd s. 
	\]
	Let $t\in [0,\tau_{\min{}}]$.  For any $j\in I_s$ the equation for $x_j$ reads
	\begin{equation}\label{lin:nonhom}
		\dot x_j(t) = \int_{-\tau}^{-\tau_{\min{}}} \theta_j(s) f_j(t,\varphi_{j-1}(t+s)) \dd{}s - \mu_j x_j(t), \;
		j\bmod k.
	\end{equation}
	For each $j\in I_s$, Eq.~\eqref{lin:nonhom} is a~non-homogenous linear equation, so the 
	solution exists on the whole interval $[0,\tau_{\min{}}]$. The right-hand sides of 
	equations for $x_j$, $j\in I_g$, are globally Lipschitz with respect to the second variable, and therefore
	the solution is defined on the whole interval $[0,\tau_{\min{}}]$. 
	The same argument and mathematical induction allows us to prolong the solution  on 
	the interval $[n\tau_{min{}},(n+1)\tau_{\min{}}]$, for any $n\in\N$.
	This completes the proof.
\end{proof}
\begin{rem}
	If the support of $\theta_j$ is separated from zero, then we can relax the assumption that 
	$f_j$ is Lipchitz continuous  and the assertion of Theorem~\ref{thm:ex} remains true 
	for $f_j$ that is only continuous (or even only integrable). 
\end{rem}

In this paper we prove the following general result.
\begin{thm}\label{thm:globalstability}
	Let the assumptions of Theorem~\ref{thm:ex} hold and $f_j(t,0)=0$ for all $j=1,2\dotsc,k$. 
	Assume also that $\mu_j>0$, $j=1,2,\dotsc,k$, and there exist non-negative numbers $\alpha_j$, $j=1,2,\dotsc,k$, 
	such that the functions $f_j$ fulfil
	\begin{equation}\label{warunkig}
%		\begin{split}
			|f_j(t,x)| \le \frac{\alpha_j}{\mu_j}|x|, \quad x\in\R, \;\; j=2,3,\dotsc,k, \qquad %\\
			|f_1(t,x)| < \frac{\alpha_1}{\mu_1}|x|, \quad x\neq 0, 
%		\end{split}
	\end{equation}
	for all $t\ge t_0$. 
	If $\alpha_1\alpha_2\dotsm\alpha_k\le\mu_1\mu_2\dotsm\mu_k$,  then the trivial steady state of~\eqref{uklad} 
	is globally asymptotically stable, that is for any initial function
	$\phi=(\phi_1,\dotsc,\phi_k)\in\Cb(\R^k)$ the solution to~\eqref{uklad},~\eqref{ic}  
	converges to $0$ as $t\to+\infty$.
\end{thm}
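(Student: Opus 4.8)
The plan is to control the quantities $r_j:=\limsup_{t\to+\infty}\abs{x_j(t)}$ and to prove that the cyclic structure of~\eqref{uklad}, together with the product condition $\alpha_1\alpha_2\dotsm\alpha_k\le\mu_1\mu_2\dotsm\mu_k$, forces $r_j=0$ for every $j$; since $f_j(t,0)=0$, convergence of the solution to the trivial state then follows. I use the cyclic convention $x_0\equiv x_k$, so that $f_1$---the only function carrying the \emph{strict} bound in~\eqref{warunkig}---closes the feedback loop $x_1\to x_2\to\dotsb\to x_k\to x_1$. By Theorem~\ref{thm:ex} the solution exists for all $t\ge t_0$, so each $r_j\in[0,+\infty]$ is well defined; the a~priori estimates obtained below also yield Liapunov stability of the origin, so it is enough to prove attractivity.

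First I would establish that the trajectory is bounded, i.e.\ $r_j<+\infty$. Writing each equation in the variation-of-constants form
\[
	x_j(t)=x_j(t_0)\,\e^{-\mu_j(t-t_0)}+\int_{t_0}^{t}\e^{-\mu_j(t-u)}\!\left(\int_{-\tau}^{0}\theta_j(s)\,f_j\bigl(u,x_{j-1}(u+s)\bigr)\,\dd s\right)\dd u,
\]
taking absolute values, and using $\abs{f_j(u,x)}\le\frac{\alpha_j}{\mu_j}\abs{x}$ together with $\int_{-\tau}^{0}\theta_j(s)\,\dd s=1$, the eventual supremum of $\abs{x_j}$ is bounded by a~term that decays in $t$ plus a~fixed multiple of the eventual supremum of $\abs{x_{j-1}}$. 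Chaining these estimates once around the loop yields an inequality $S\le C+\varrho\,S$ for the common eventual supremum $S$, where $\varrho$ is the loop gain $\alpha_1\dotsm\alpha_k/(\mu_1\dotsm\mu_k)$. When the product condition is strict the loop gain satisfies $\varrho<1$, the estimate is a~contraction, and $S<\infty$ follows immediately. In the borderline case $\alpha_1\dotsm\alpha_k=\mu_1\dotsm\mu_k$ this estimate degenerates and boundedness must instead be squeezed out of the strict inequality on $f_1$; this is, I expect, the principal technical obstacle of the proof.

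Once finiteness of the $r_j$ is secured, the heart of the argument is a~cyclic system of limsup inequalities. Passing to the limit superior in the variation-of-constants bound (equivalently, applying a~fluctuation argument at the asymptotic peaks of $\abs{x_j}$) gives
\[
	r_j\le\frac{\alpha_j}{\mu_j}\,r_{j-1}\quad(j=2,\dotsc,k),\qquad r_1\le\frac{\alpha_1}{\mu_1}\,r_k,
\]
and, crucially, the last inequality is \emph{strict} whenever $r_k>0$. This strictness is exactly where the hypothesis $\abs{f_1(t,x)}<\frac{\alpha_1}{\mu_1}\abs{x}$ for $x\neq0$ enters: if $r_k>0$, the values $x_k(t+s)$ entering the feedback term at the asymptotic peaks of $\abs{x_1}$ stay in a~region bounded away from the origin, where the strict bound on $f_1$ is effective, so the asymptotic value of the feedback term is strictly smaller than $\frac{\alpha_1}{\mu_1}r_k$.

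Finally I would close the loop by multiplying the $k$ inequalities:
\[
	r_1 r_2\dotsm r_k\le\frac{\alpha_1\alpha_2\dotsm\alpha_k}{\mu_1\mu_2\dotsm\mu_k}\,r_1 r_2\dotsm r_k\le r_1 r_2\dotsm r_k.
\]
If all the $r_j$ were strictly positive, the strict inequality in the $f_1$-step would make the chain strict, giving $r_1\dotsm r_k<r_1\dotsm r_k$, a~contradiction; hence $r_m=0$ for some $m$. Feeding $r_m=0$ into the non-strict inequalities and running once around the cycle forces $r_{m+1}=r_{m+2}=\dotsb=0$, so every $r_j=0$ and $x_j(t)\to0$ as $t\to+\infty$. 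The two places demanding care are the borderline boundedness step and the promotion of the pointwise strict bound on $f_1$ to a~strict limsup inequality; both rest on the same mechanism, namely that a~positive limsup keeps the relevant arguments away from the origin, where the feedback gain is genuinely sub-critical.
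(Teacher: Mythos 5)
Your proposal takes a genuinely different route from the paper. The paper never estimates limsups along solutions: it reduces the problem to the cyclic discrete map $y_j(n+1)=h_j(y_{j-1}(n))$ with $h_j=f_j/\mu_j$, proves in Theorem~\ref{thm:golbal} that $0$ is a strong attractor in the sense of Definition~\ref{def:sa} by constructing a family of nested boxes $I_m$ --- with an auxiliary increasing majorant $\tilde h_1$, $|h_1(x)|<\tilde h_1(x)<\beta_1|x|$, invented precisely to handle the borderline case $\beta_1\beta_2\dotsm\beta_k=1$ --- and then quotes the transfer theorems of Liz and Ruiz-Herrera~\cite{Liz2013jde} to pass from the discrete map to delay-independent global stability of~\eqref{uklad}. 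Your direct variation-of-constants/fluctuation argument bypasses that machinery entirely, and in the strict case $\alpha_1\dotsm\alpha_k<\mu_1\dotsm\mu_k$ it goes through essentially as you sketch. (One bookkeeping remark: with~\eqref{warunkig} as printed, your own estimate yields $r_j\le(\alpha_j/\mu_j^2)\,r_{j-1}$, not $(\alpha_j/\mu_j)\,r_{j-1}$; your loop gain $\varrho=\alpha_1\dotsm\alpha_k/(\mu_1\dotsm\mu_k)$ tacitly uses the bound $|f_j(t,x)|\le\alpha_j|x|$. The paper's reduction $h_j=f_j/\mu_j$ with $\beta_j=\alpha_j/\mu_j$ relies on the same reading, so you are consistent with the intended statement rather than the printed one.)

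However, the two steps you flag are genuine gaps, and the mechanisms you propose for them would fail as stated. First, borderline boundedness cannot be ``squeezed out of the strict inequality on $f_1$'': pointwise strictness gives no uniform gap at large $|x|$ --- take $f_1(x)=\alpha_1\bigl(|x|-\sqrt{|x|}\bigr)\sign(x)$, whose gap is $o(|x|)$ --- so $S\le C+\varrho S$ remains useless at $\varrho=1$. What works instead is a forward-invariant-box argument, the continuous shadow of the paper's sets $I_m$: choose $a_j>0$ with $\mu_j a_j\ge\alpha_j a_{j-1}$ cyclically, which is possible precisely because $\alpha_1\dotsm\alpha_k\le\mu_1\dotsm\mu_k$, scale the box to contain the initial segment (the constraint is homogeneous), and note that $\prod_j[-a_j,a_j]$ is then invariant for~\eqref{uklad}; this yields boundedness and the Liapunov stability you assert in passing, in one stroke. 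Second, your justification of the strict limsup inequality is wrong: at asymptotic peaks of $|x_1|$ the delayed values $x_k(t+s)$ need \emph{not} stay bounded away from the origin, since $x_k$ may oscillate through $0$. Strictness comes from compactness at the level of the bound, not of the trajectory: $M(\rho)=\max_{|x|\le\rho}|f_1(x)|$ is continuous in $\rho$ and satisfies $M(\rho)<\alpha_1\rho$ for $\rho>0$ because the maximum is attained on a compact set where the strict inequality holds pointwise; hence if $r_k>0$ then $r_1\le M(r_k)/\mu_1<(\alpha_1/\mu_1)\,r_k$ in the normalisation of your chain. This is exactly the role of the paper's majorant $\tilde h_1$. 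One caveat applies to both arguments equally: since~\eqref{warunkig} is pointwise in $t$, this compactness repair presupposes a time-uniform strict majorant, which genuinely $t$-dependent $f_1$ need not admit --- the paper itself silently drops the $t$-dependence when writing $h_j(x)=\frac{1}{\mu_j}f_j(x)$. With these two repairs, your multiplication around the loop and the propagation of $r_m=0$ to all coordinates are sound.
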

To prove Theorem~\ref{thm:globalstability} we use Theorem~2.5 from the paper by Liz and Rus-Herrera;~\cite{Liz2013jde}. 

\begin{defs}[Definition 2.1 in \cite{Liz2013jde}]\label{def:sa}
	Let $H\colon D\to D$ be a~continuous map defined on $D = (a_1, b_1)\times (a_2,b_2)\times\dotsm\times(a_k,b_k)$.
	An equilibrium $y^*\in D$ of the system
	\[
		y(n+1) = H\bigl(y(n)\bigr), \quad  n=1,2,3,\dotsc, 
	\]
	is a~strong attractor in $D$ if for every compact set $K\subset D$ there exists a~family of sets $\{I_m\}$, $m\in\N$,
	where $I_m$ is the product of $k$ nonempty compact intervals, satisfying
	\begin{enumerate}[(B1)]
		\item  $K \subset \Int(I_1)\subset  D$,
		\item  $H(I_m) \subset I_{m+1} \subset \Int(I_m)$ for all $m\in\N$,
		\item  $y^*\in \Int(I_m)$ for all $m\in\N$, and $\bigcap_{m=1}^\infty I_m = \{ y^*\}$.
	\end{enumerate}
\end{defs}

First, we prove the following.
\begin{thm}\label{thm:golbal}
	Let $h_j:\R\to\R$, $j=1,2,\dotsc,k$, be arbitrary continuous functions, such that 
	$|h_1(x)|<\beta_1|x|$ for $x\neq 0$, and $|h_j(x)| \le \beta_j |x|$, $j=2,3,\dotsc,k$, 
	for all $x\in\R$. 
	If $\beta_1\beta_2\dotsm\beta_n\le 1$, then the point $0\in\R^k$ is a~strong attractor 
	in $\R^k$ of the discrete dynamical system 
	\[
		y_{j}(n+1) = h_j(y_{j-1}(n)), \quad j=1,2,3,\dotsc,k, \; j \bmod k, 
				\qquad n=1,2,\dotsc
	\]
\end{thm}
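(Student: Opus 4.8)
The plan is to realise the definition of a strong attractor directly, by constructing for each compact $K\subset\R^k$ an explicit nested family of product boxes shrinking to the origin. Throughout write $H(y)=(h_1(y_k),h_2(y_1),\dots,h_k(y_{k-1}))$ for the map generating the system (indices $\bmod\,k$), and note that each hypothesis forces $h_j(0)=0$ by continuity, so $0$ is the equilibrium. The basic tool is the family of moduli
\[
 M_j(a)=\sup_{|x|\le a}|h_j(x)|,\qquad a\ge0,
\]
which are continuous, non-decreasing and vanish at $0$. First I would record the two facts that drive everything: $M_j(a)\le\beta_j a$ for $j\ge2$, while for $j=1$ the \emph{strict} pointwise bound upgrades, on the compact interval $[-a,a]$, to a strict bound on the supremum, $M_1(a)<\beta_1 a$ for every $a>0$. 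This step — continuity plus compactness turning a pointwise strict inequality into a uniform one — is elementary but is exactly where the hypothesis on $h_1$ is used, and it is indispensable at the boundary of the parameter region.

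Next I would reduce the problem to a scalar radius dynamics. Because the $j$-th coordinate of $H$ depends only on $y_{j-1}$, a product box $\prod_j[-r_j,r_j]$ is mapped by $H$ into $\prod_j[-M_j(r_{j-1}),M_j(r_{j-1})]$. Hence to produce the required family $\{I_m\}$ it suffices to produce radius vectors $r^{(m)}=(r_1^{(m)},\dots,r_k^{(m)})$ with $M_j(r_{j-1}^{(m)})\le r_j^{(m+1)}<r_j^{(m)}$ for all $j$ (which gives $H(I_m)\subseteq I_{m+1}\subset\Int(I_m)$), with $r_j^{(m)}>0$ and $r_j^{(m)}\to0$. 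The engine I would use is the midpoint rule: once a box satisfies the strict invariance $(\star)$: $M_j(r_{j-1})<r_j$ for all $j$, set
\[
 r_j^{(m+1)}=\tfrac12\bigl(M_j(r_{j-1}^{(m)})+r_j^{(m)}\bigr).
\]
Then $M_j(r_{j-1}^{(m)})<r_j^{(m+1)}<r_j^{(m)}$ automatically, and since $M_j$ is non-decreasing and $r_{j-1}^{(m+1)}<r_{j-1}^{(m)}$ one gets $M_j(r_{j-1}^{(m+1)})\le M_j(r_{j-1}^{(m)})<r_j^{(m+1)}$, i.e. $(\star)$ propagates. Thus all boxes are strictly nested with the correct image containment.

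It remains to show $r_j^{(m)}\to0$, and here the product hypothesis enters. Each radius is positive and strictly decreasing in $m$, so $r_j^{(m)}\downarrow r_j^\ast\ge0$; passing to the limit in the midpoint rule and using continuity of $M_j$ gives $r_j^\ast=M_j(r_{j-1}^\ast)$ cyclically. Chaining $r_j^\ast\le\beta_j r_{j-1}^\ast$ for $j\ge2$ yields $r_k^\ast\le\beta_2\cdots\beta_k\,r_1^\ast$; if $r_k^\ast>0$ the strict bound gives $r_1^\ast=M_1(r_k^\ast)<\beta_1 r_k^\ast\le\beta_1\beta_2\cdots\beta_k\,r_1^\ast\le r_1^\ast$, a contradiction, so $r_k^\ast=0$ and then the chain forces every $r_j^\ast=0$. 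Finally I would supply the initial box $I_1\supset K$ satisfying $(\star)$: when $\beta_1\cdots\beta_k<1$ one simply takes the homogeneous box $r_j=\gamma_2\cdots\gamma_j\,r_1$ with ratios $\gamma_j$ slightly above $\beta_j$ chosen so that $\beta_1\gamma_2\cdots\gamma_k<1$ and $r_1$ large, which has a uniform gap in every coordinate.

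The main obstacle is the equality case $\beta_1\cdots\beta_k=1$. There no uniform contraction factor is available — the decay is only sub-geometric, governed entirely by the strict sublinearity of $h_1$ — so the clean homogeneous box fails and one must build the initial invariant box by hand: fix $r_1$ large enough that all induced radii engulf $K$, propagate $r_j$ just above $M_j(r_{j-1})$ around the cycle, and close the loop using that the \emph{fixed positive} gap $\beta_1 r_k-M_1(r_k)>0$ dominates the accumulated slack, so that the feedback inequality $M_1(r_k)<r_1$ remains strict. Verifying that these two demands can be met simultaneously is the delicate point; once the initial box is secured, the midpoint engine and the convergence argument above carry the proof through unchanged for both $\beta_1\cdots\beta_k<1$ and $\beta_1\cdots\beta_k=1$.
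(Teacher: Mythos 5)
Your proof is correct, but it runs on different machinery than the paper's. The paper works with the linear majorants directly: it builds the boxes via the explicit recursion $a_j(m+1)=\beta_j a_{j-1}(m)$ started from a weighted box $a_j(1)=q_ja$, where the weights $q_j$ are chosen recursively so that $\beta_jq_{j-1}<q_j$, which forces strict nesting; in the subcritical case the intersection collapses by the geometric identity $a_1(k\ell+j)=(\beta_1\dotsm\beta_k)^\ell a_1(j)\to0$, and in the equality case the paper replaces $\beta_1|x|$ by a continuous \emph{increasing} strict majorant $\tilde h_1$ of $|h_1|$ and analyses the scalar monotone iteration $a_1(k(\ell+1)+j)=\tilde h_1\bigl(\beta_2\dotsm\beta_k\,a_1(k\ell+j)\bigr)$, whose limit must be a fixed point of a map strictly below the identity, hence $0$. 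You instead introduce the sup-moduli $M_j(a)=\sup_{|x|\le a}|h_j(x)|$ and a midpoint relaxation $r_j^{(m+1)}=\tfrac12\bigl(M_j(r_{j-1}^{(m)})+r_j^{(m)}\bigr)$, which makes strict nesting automatic once a strictly invariant initial box exists, and you kill the limit by the cyclic fixed-point relation $r_j^\ast=M_j(r_{j-1}^\ast)$ plus the product condition --- a single argument covering both $\beta_1\dotsm\beta_k<1$ and $=1$. The two proofs rest on the same two pillars (upgrading the pointwise strict bound on $h_1$ to a uniform one on compacta, and the product condition at the limit), but you make the first pillar explicit via $M_1(a)<\beta_1a$, whereas the paper buries it in the existence of $\tilde h_1$ --- in fact $\tilde h_1(x)=\tfrac12\bigl(M_1(x)+\beta_1x\bigr)$ is exactly such a majorant, so your gadget even supplies the paper's. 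What each buys: the paper's geometric radii give an explicit convergence rate in the subcritical case; your scheme is case-uniform after initialization and avoids the $q_j$ bookkeeping, at the price of the initialization subtlety you flagged.

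That flagged point does close, and it is worth recording the quantifier order that does it: first set the zero-slack radii $\rho_1=R$, $\rho_j=\max\{c,M_j(\rho_{j-1})\}$ with $c$ exceeding the coordinates of $K$; chaining $M_j(a)\le\beta_ja$ through the maxima gives $\rho_k\le\max\{C',\beta_2\dotsm\beta_kR\}$ for a constant $C'$ independent of $R$, so for $R\ge\beta_1C'$ the strict inequality $M_1(\rho_k)<\beta_1\rho_k\le R$ holds already at zero slack (and trivially if $\rho_k=0$). Since the radii depend continuously on an added slack $\eta>0$ and $M_1$ is continuous, a sufficiently small $\eta$ keeps the closing inequality strict while making $(\star)$ strict for $j\ge2$. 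Choosing $R$ \emph{before} $\eta$ is essential --- the gap $\beta_1a-M_1(a)$ may tend to $0$ as $a\to\infty$, so one cannot fix the slack first and then enlarge $R$; with that ordering made explicit, your argument is complete.
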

\begin{proof}
	Define an operator $H\colon \R^k\to \R^k$,
	\[
		H(y_1,y_2\dotsc,y_k) = \Bigl( h_1(y_k), h_2(y_1), \dotsc, h_k(y_{k-1})\Bigl)\,.
	\]
	Let $K\subset \R^k$ be an arbitrary compact set. 
	First, we prove the assertion of Theorem~\ref{thm:globalstability} under the assumption $\beta_1\beta_2\dotsm\beta_k<1$. 
	Next, we explain how to adapt the arguments to the case $\beta_1\beta_2\dotsm\beta_k=1$. Although arguments 
	are very similar in both cases, in the case $\beta_1\beta_2\dotsm\beta_k<1$ we can omit some technicalities 
	and we belive it is easier to follow the main idea of the proof. 

	Assume $\beta_1\beta_2\dotsm\beta_k<1$. 
	We chose a~positive numbers $q_j$, $j=1,2,\dotsc,k-1$, in a~recursive manner. Let $q_k=1$ and let 
	$q_j$, $j=1,2,\dotsc,k-1$ be any numbers satisfying 
	\begin{equation}\label{defq}
			\beta_1\beta_2\dotsm\beta_j <q_j <\frac{q_{j+1}}{\beta_{j+1}}, \quad j=1,2,\dotsc,k-1.
	\end{equation} 
	Indeed, such numbers exists. We prove it by induction. As $\beta_1\beta_2\dotsm\beta_k<1$, we have 
	$q_k = \beta_1\beta_2\dotsm\beta_k+\varepsilon_k$, with $\varepsilon_k>0$. Now, assume 
	that $q_{j+1} = \beta_1\beta_2\dotsm\beta_j\beta_{j+1}+\varepsilon_{j+1}$ and choose 
	$\varepsilon_j$ as an arbitrary number such that 
	\[
		0<\varepsilon_j<\frac{\varepsilon_{j+1}}{\beta_{j+1}}. 
	\]
	It is easy to check, that~\eqref{defq} holds for $q_j=\beta_1\beta_2\dotsm\beta_j+\varepsilon_j$. 
	Hence, by mathematical induction there exists $q_j$, $j=1,2,\dotsc,k-1$, such that 
	 inequalities~\eqref{defq} hold. 

	Set $a$ so large that 
	\[
		K \subset  [-q_1 a,q_1a]\times [-q_2 a,q_2a] \times \dotsm 
								[-q_{k-1} a,q_{k-1} a]\times [-a,a]. 
	\]
	Define
	\begin{equation}\label{aj1}
		a_j(1) = q_j a, \;\; j=1,2,\dotsc, k-1, 
	\end{equation}
	and
	\begin{equation}\label{ajm}
		a_j(m+1) = \beta_j a_{j-1}(m), \quad j=1,2,\dotsc,k,\;j\bmod k, \qquad  m=1,2,3,\dotsc,
	\end{equation}
	and
	\[
		I_m = [-a_1(m),a_1(m)]\times[-a_2(m),a_2(m)]\times\dotsm\times[-a_k(m),a_k(m)], \quad m=1,2,3,\dotsc.
	\]
	Due to~\eqref{defq}, \eqref{aj1} and \eqref{ajm} we have 
	\[
		a_j(2) = \beta_j a_{j-1}(1) = \beta_j q_{j-1} a~< q_j a~= a_j(1), \;\;j=1,2,\dotsc,k,\;\; j\bmod k,
	\]
	and therefore $H(I_1)\subset I_2\subset \Int(I_1)$. 
	For any $m=2,3,\dotsc$, and $j=1,2,\dotsc,k$, $j\bmod k$, the following implication is true 
	\begin{equation}\label{implikacja}
		a_{j-1}(m-1)>a_{j-1}(m) \; \Longrightarrow \;
		a_j(m+1) = \beta_j a_{j-1}(m) < \beta_j a_{j-1}(m-1) = a_j(m).
	\end{equation}
	Thus, if $I_m\subset I_{m-1}$, then $I_{m+1}\subset I_m$ and the claim 
	that $I_{m+1} \subset \Int(I_m)$ for all $m\in\N$ is proved by mathematical induction.
	Moreover, the assumption $|h_j(x)|\le\beta_j|x|$ implies $H(I_m)\subset I_{m+1}$ for all $m\in\N$,
	and hence the family of sets $\{I_m\}$ fulfil conditions (B1) and (B2) of Definition~\ref{def:sa}. 

	It remains to prove that condition (B3)  of Definition~\ref{def:sa} holds. 
	It is clear that $0\in I_m$ for all $m\in\N$. 
	Note, because $\beta_1\beta_2\dotsm\beta_k<1$,
	\[
		a_1(k\ell+j) = \Bigl(\beta_1\beta_2\dotsm\beta_k\Bigr)^{\ell}a_1(j) \to 0, \;\;\text{ for any } \;\;
			j=1,2,\dotsc,k, \;\; \ell\in \N,
	\]
	as $\ell\to+\infty$. This implies $a_1(m)\to 0$ as $m\to+\infty$. On the other hand, 
	$a_j(m) = \beta_2\beta_3\dotsm\beta_j a_1(m)$, $j=2,3,\dotsc,k$. Thus, for any
	$j=1,2,\dotsc,k$ we have $a_j(m)\to 0$ as $m\to+\infty$, so 
	\[
		\bigcap_{m=1}^{\infty} I_m = \{ 0 \}, 
	\]
	condition (B3)  of Definition~\ref{def:sa} holds, and the 
	assertion of Theorem~\ref{thm:golbal} is proved for $\beta_1\beta_2\dotsm\beta_k<1$.

	\smallskip
	It remains to prove the assertion if $\beta_1\beta_2\dotsm\beta_k=1$. Let $\tilde h$ be 
	an arbitrary continuous increasing function such that for all $x> 0$ inequalities 
	\begin{equation}\label{defhtilde}
		|h_1(x)| < \tilde h_1(x) <\beta_1 |x|
	\end{equation}
	hold. Such function exist due to the assumption $|h_1(x)|<\beta_1|x|$ and that 
	the function $x\mapsto\beta_1 |x|$ is increasing. 
	Set an arbitrary $\tilde a>0$
	such that $K\subset [-\tilde a,\tilde a]^k$ and 
	\[
		r = \min_{j=1,2,\dotsc,k} \{ \beta_1\beta_2\dotsm\beta_j\}\le 1. 
	\]
	Take $a = \tilde a/r$. Let us choose numbers $q_j$ such that inequalities~\eqref{defq}
	hold but with $\beta_1$ replaced by $\tilde h_1(a)/a$, and define $a_1(j)$,
	$j=1,2,\dots,k$ as in~\eqref{aj1}. Now, we define $a_j(m)$ for $j=2,3,\dotsc,k$, and $m=2,3,4,\dotsc$ as 
	in~\eqref{ajm}, and take $a_1(m) = \tilde h_1(a_k(m-1))$.
	Note, that under such choice of $a_j(m)$, $j=1,2,\dotsc,k$ and $m=2,3,4,\dotsc$, the
	implication~\eqref{implikacja} also holds. Indeed, the argument for $j=2,3,\dotsc,k$
	is the same as above, and for $j=1$ we have 
	\[
		a_1(m) = \tilde h_1(a_k(m-1)) > \tilde h_1(a_k(m)) = a_1(m+1),
	\]
	due to~\eqref{defhtilde} and the fact that  $\tilde h$ is increasing.
	Thus, the arguments as in the case  $\beta_1\beta_2\dotsm\beta_k<1$  and 
	the first inequality of~\eqref{defhtilde} yield $H(I_m)\subset I_{m+1} \subset \Int(I_m)$. 

	To show the intersection of the family $\{I_m\}$ is the point $\{0\}$ we need a~little finner argument than above, 
	but the idea remains similar. Indeed, for any $j=1,2,\dotsc,k$ the sequence 
	$\{a_1(k \ell+j)\}_{l=1}^\infty$ is given by the following equality
	\[
		a_1(k(\ell+1)+j) = \tilde h_1\Bigl(\beta_2\beta_3\dotsm\beta_k a_1(k \ell+j)\Bigr).
	\]
	Because, due to~\eqref{defhtilde}, for any $x>0$
	\[
		0<\tilde h_1\Bigl(\beta_2\beta_3\dotsm\beta_k x\Bigr) < \beta_1 \beta_2\dotsm\beta_k x \leq x ,
	\]
	for any fixed $j=1,2,\dotsc,k$ the sequence $\{a_1(k \ell+j)\}_{l=1}^\infty$ is monotone and bounded, yielding the existence of limit. It is easy to see that this limit is equal to $0$.  The argument as in the case 
	$\beta_1\beta_2\dotsm\beta_k<1$  completes the proof.
\end{proof}

\smallskip{}
\begin{proof}[of Theorem~\ref{thm:globalstability}]
	It is easy to see that the functions $h_j(x) = \frac{1}{\mu_j}f_j(x)$ fulfil 
	assumptions of Theorem~\ref{thm:golbal}. Thus, 
	the assertion of Theorem~\ref{thm:globalstability} follows from
	Remark~2.2 and Theorems~2.5 and 2.6 from~\cite{Liz2013jde}.
\end{proof}

In Theorem~\ref{thm:globalstability} we assumed that the functions $f_j$, $j=1,2,\dotsc,k$, are 
defined on the whole real line and the point $0\in\R^k$ is a~steady state. Of course, 
with a~simple change of variables we can always move a~steady state to $0\in\R^k$. Now, we 
present an obvious corollary that extends our result for the system where the functions
$f_j$ are defined only on some interval. 
\begin{cor}\label{wn:mniejszy}
	The assertion of  Theorem~\ref{thm:globalstability} remains true with 
	the functions $f_j$ defined on the closed intervals $J_j$ if the set 
	$J_1\times J_2\times\dotsm\times J_k$ is invariant under the evolution 
	of the system~\eqref{uklad}.
\end{cor}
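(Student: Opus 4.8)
The plan is to reduce the statement to Theorem~\ref{thm:globalstability} by extending each $f_j$ to a function defined on all of $\R$ \emph{without enlarging its growth constant}, applying the already-proved global theorem to the extended system, and then using the invariance of the box $J:=J_1\times\dotsm\times J_k$ together with uniqueness to transfer the conclusion back. Throughout, indices are taken modulo $k$, and I note that $0\in J_j$ for every $j$, since $0$ is the steady state and it lies in the invariant set.

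First I would construct the extension. For each $j$, let $P_j\colon\R\to J_{j-1}$ be the nearest-point projection onto the closed interval $J_{j-1}$ on which the argument $x_{j-1}$ of $f_j$ ranges, i.e.\ the clamp $P_j(x)=\max\{c,\min\{d,x\}\}$ when $J_{j-1}=[c,d]$, and set $\tilde f_j(t,x):=f_j\bigl(t,P_j(x)\bigr)$. Since $P_j$ is continuous, globally $1$-Lipschitz, and the identity on $J_{j-1}$, the function $\tilde f_j$ is continuous on $\R\times\R$, coincides with $f_j$ on $J_{j-1}$, satisfies $\tilde f_j(t,0)=f_j(t,0)=0$, and inherits the local (resp.\ global) Lipschitz property of $f_j$. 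In particular the global-existence alternative of Theorem~\ref{thm:ex} is preserved (the support of $\theta_j$ is unchanged, and composing a globally Lipschitz $f_j$ with the $1$-Lipschitz map $P_j$ stays globally Lipschitz), so the extended system has solutions defined for all $t\ge t_0$.

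The crucial point, and the one genuinely delicate step, is that the extension does not spoil the sharp bounds~\eqref{warunkig}. Because $0\in J_{j-1}$, the projection onto $J_{j-1}$ never increases absolute value, $|P_j(x)|\le|x|$ for all $x\in\R$. Hence for $j\ge 2$ we get $|\tilde f_j(t,x)|\le\frac{\alpha_j}{\mu_j}|P_j(x)|\le\frac{\alpha_j}{\mu_j}|x|$. For the distinguished index $j=1$ the strict inequality must be argued by a small case distinction: if $P_1(x)\neq 0$, then the original strict bound at the point $P_1(x)\in J_k$ yields $|\tilde f_1(t,x)|<\frac{\alpha_1}{\mu_1}|P_1(x)|\le\frac{\alpha_1}{\mu_1}|x|$; and if $P_1(x)=0$, which for $x\neq 0$ can occur only when $0$ is an endpoint of $J_k$, then $\tilde f_1(t,x)=f_1(t,0)=0<\frac{\alpha_1}{\mu_1}|x|$. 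Thus the $\tilde f_j$ satisfy all hypotheses of Theorem~\ref{thm:globalstability}, so the trivial steady state of the extended system is globally asymptotically stable.

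Finally I would invoke invariance to conclude. Take any initial datum $\phi$ with values in $J$. By hypothesis $J$ is invariant under~\eqref{uklad}, so the corresponding solution satisfies $x(t)\in J$, whence $x_{j-1}(t+s)\in J_{j-1}$, for all relevant $t,s$. On $J_{j-1}$ one has $P_j=\mathrm{id}$, so $f_j=\tilde f_j$ along this trajectory and $x$ is also a solution of the \emph{extended} system with the same initial datum. Uniqueness (from the local Lipschitz property) forces $x$ to coincide with the extended solution, which tends to $0$ by the previous paragraph; hence $x(t)\to 0$ as $t\to+\infty$, as claimed. I expect the only real obstacle to be the preservation of the strict estimate for $j=1$ under the extension; everything else is routine once the clamp $P_j$ is used.
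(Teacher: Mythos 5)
Your proposal is correct and takes essentially the same approach as the paper: your clamp composition $\tilde f_j(t,x)=f_j\bigl(t,P_j(x)\bigr)$ is exactly the paper's constant extension ($f_j(x)=f_j(a_j)$ for $x<a_j$, $f_j(x)=f_j(b_j)$ for $x>b_j$), followed by an application of Theorem~\ref{thm:globalstability}. You merely spell out details the paper leaves implicit --- preservation of the bounds~\eqref{warunkig} using $0\in J_j$ and $|P_j(x)|\le|x|$, the strict estimate for $j=1$, and the transfer back via invariance and uniqueness --- and these verifications are correct.
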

\begin{proof}
	If the function $f_j$ is defined on the interval $[a_j,b_j]$,  we 
	extend it on the whole line setting $f_j(x)=f_j(a_j)$ for $x<a_j$
	and $f_j(x)=f_j(b_j)$ for $x>b_j$.  If the function $f_j$ is defined on
	$(-\infty,b_j]$ or on $[a_j,+\infty)$ we proceed analogously. 
	After extending all $f_j$, $j=1,2,\dotsc,k$ on the whole line we can apply
	Theorem~\ref{thm:globalstability}. 
\end{proof}

The condition in~Theorem~\ref{thm:globalstability} is sharp in the following sense. If 
$f_j$ are differentiable at $0$, and assumptions of Theorem~\ref{thm:globalstability}
hold, then $|f_j'(0)|\le\alpha_j$. Hence, we can formulate the following result
\begin{prop}\label{prop:sharp}
	Let $f_j$, $j=1,2,\dotsc,k$, be differentiable at $0$ and
	assume that the hypothesis of Theorem~\ref{thm:globalstability} holds
	with   $\alpha_j=|\gamma_j|$, where $\gamma_j = f_j'(0)$, $j=1,2,\dotsc,k$. Then the
	following statements are true 
	\begin{enumerate}[(i)]
		\item If $\gamma_1\gamma_2\dotsm\gamma_k>0$, then the trivial steady state is 
			  globally asymptotically stable for $\gamma_1\gamma_2\dotsm\gamma_k\le\mu_1\mu_2\dotsm\mu_k$ 
			  and unstable otherwise.
		\item If $\gamma_1\gamma_2\dotsm\gamma_k<0$ 
			  and $|\gamma_1\gamma_2\dotsm\gamma_k|\le\mu_1\mu_2\dotsm\mu_k$, 
			  then the trivial steady state is globally asymptotically stable.
		\item If $\gamma_1\gamma_2\dotsm\gamma_k<0$ 
			  and $|\gamma_1\gamma_2\dotsm\gamma_k|>\mu_1\mu_2\dotsm\mu_k$, 
			  each of the measures $\theta_j$ is concentrated in a~single point $\tau_j$,
			  then one of two possibilities can occur.
			  If the zero steady state is locally asymptotically stable 
			  for $\tau=\tau_1+\tau_2+\dotsb+\tau_k=0$, then there exists a~critical 
			  value $\tau_{\kryt}>0$, such that the trivial steady state is stable 
			  for $\tau<\tau_{\kryt}$, unstable for $\tau>\tau_{\kryt}$, and at 
			  the point $\tau=\tau_{\kryt}$ Hopf bifurcation occurs. 
			  On the other hand, if the zero steady state is unstable for 
			  $\tau=\tau_1+\tau_2+\dotsb+\tau_k=0$, then it is unstable for all
			  $\tau>0$. 
	\end{enumerate}
\end{prop}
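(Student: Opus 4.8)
The plan is to base everything on the linearisation of~\eqref{uklad} about the trivial steady state and on its characteristic equation. Writing $f_j(t,x)=\gamma_j x+o(x)$ and seeking solutions $x_j(t)=c_j\e^{\lambda t}$ of the linearised system $\dot x_j=\gamma_j\int_{-\tau}^0\theta_j(s)x_{j-1}(t+s)\dd s-\mu_j x_j$, I obtain the cyclic relations $(\lambda+\mu_j)c_j=\gamma_j\hat\theta_j(\lambda)c_{j-1}$, where $\hat\theta_j(\lambda)=\int_{-\tau}^0\e^{\lambda s}\theta_j(s)\dd s$. Multiplying around the cycle gives the characteristic equation
\[
	\prod_{j=1}^k(\lambda+\mu_j)=\Gamma\prod_{j=1}^k\hat\theta_j(\lambda),\qquad \Gamma:=\gamma_1\gamma_2\dotsm\gamma_k .
\]
Throughout I would invoke the standard principle of linearised (in)stability for retarded DDEs (see~\cite{Hale93}): the steady state is asymptotically stable when all characteristic roots lie in the open left half-plane and unstable when some root has positive real part. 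The global-stability halves of (i) and (ii) are then immediate, since the hypotheses of Theorem~\ref{thm:globalstability} hold with $\alpha_j=|\gamma_j|$: the product condition becomes $|\Gamma|=\alpha_1\dotsm\alpha_k\le\mu_1\dotsm\mu_k$, and Theorem~\ref{thm:globalstability} yields global asymptotic stability. This settles (ii) completely and the stable branch of (i).

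For the instability branch of (i), where $\Gamma>\mu_1\dotsm\mu_k$, I would restrict the characteristic function to the real axis. Setting $g(\lambda)=\prod_j(\lambda+\mu_j)-\Gamma\prod_j\hat\theta_j(\lambda)$ and using $\hat\theta_j(0)=1$ gives $g(0)=\mu_1\dotsm\mu_k-\Gamma<0$; moreover $\hat\theta_j(\lambda)\in(0,1]$ for $\lambda\ge 0$, so the second term stays bounded while $\prod_j(\lambda+\mu_j)\to+\infty$, whence $g(\lambda)\to+\infty$. By continuity $g$ has a positive real root, producing a characteristic root with positive real part and hence instability. Note this argument requires no hypothesis on the measures.

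For part (iii) I would specialise to the Dirac case $\hat\theta_j(\lambda)=\e^{-\lambda\tau_j}$, so the characteristic equation becomes $\prod_j(\lambda+\mu_j)=\Gamma\e^{-\lambda\tau}$ with $\tau=\tau_1+\dotsb+\tau_k$, now with $\Gamma<0$ and $|\Gamma|>\mu_1\dotsm\mu_k$. A purely imaginary root $\lambda=i\omega$, $\omega>0$, forces, after taking moduli, $\prod_j\sqrt{\omega^2+\mu_j^2}=|\Gamma|$; the left-hand side increases strictly from $\mu_1\dotsm\mu_k$ (at $\omega=0$) to $+\infty$, so since $|\Gamma|>\mu_1\dotsm\mu_k$ there is a \emph{unique} $\omega_0>0$ solving it, and $\lambda=0$ is never a root. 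The argument equation then selects the admissible delays $\tau=\tau_n=\omega_0^{-1}\bigl(\pi-\sum_j\arctan(\omega_0/\mu_j)+2n\pi\bigr)$, $n\in\N$, reduced to the positive ones, and $\tau_{\kryt}$ is the smallest of these.

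The decisive step is the transversality computation. Differentiating $\prod_j(\lambda+\mu_j)=\Gamma\e^{-\lambda\tau}$ implicitly in $\tau$ and using the equation itself to eliminate $\Gamma\e^{-\lambda\tau}$, I obtain
\[
	\Bigl(\frac{\dd\lambda}{\dd\tau}\Bigr)^{-1}=-\frac{1}{\lambda}\Bigl(\sum_{j=1}^k\frac{1}{\lambda+\mu_j}+\tau\Bigr).
\]
Evaluating at $\lambda=i\omega_0$ and using $\Re(1/z)=\Re(z)/|z|^2$ gives $\Re\bigl(\dd\lambda/\dd\tau\bigr)^{-1}=\sum_j(\mu_j^2+\omega_0^2)^{-1}>0$, so $\Re(\dd\lambda/\dd\tau)>0$ at every crossing: as $\tau$ grows, roots cross the imaginary axis only from left to right, and only the conjugate pair $\pm i\omega_0$ can ever lie on the axis. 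Consequently the number of right-half-plane roots is non-decreasing in $\tau$ and strictly increases at each $\tau_n$. If the steady state is stable at $\tau=0$, it remains stable until the first crossing $\tau_{\kryt}=\tau_0$, where a conjugate pair crosses transversally — precisely the Hopf condition — so it is unstable for $\tau>\tau_{\kryt}$, giving the first alternative of (iii). If it is already unstable at $\tau=0$, the count is positive and can only grow, so instability persists for all $\tau>0$, giving the second alternative. I expect the transversality identity together with the monotone bookkeeping of crossings to be the main obstacle; checking the remaining hypotheses of the Hopf theorem for DDEs (simplicity of the crossing root and absence of resonance) is routine once $\omega_0$ is known to be unique.
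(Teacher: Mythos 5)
Your proposal is correct and follows the same overall architecture as the paper: the stable halves of (i) and (ii) are quoted directly from Theorem~\ref{thm:globalstability}, and the remaining claims are read off the characteristic function $W(\lambda)=\prod_{j=1}^k(\lambda+\mu_j)-\Gamma\prod_{j=1}^k\hat\theta_j(\lambda)$, with part (iii) resting on the uniqueness of the positive root $\omega_0$ of $\prod_j(\omega^2+\mu_j^2)=\Gamma^2$ and on left-to-right crossing of the imaginary axis. You deviate in two sub-steps, both defensibly. For the instability half of (i), the paper handles the undelayed case by exhibiting a positive real root of $W$ and then, for the delayed case, invokes the Mikhailov criterion (the argument variation of $W(i\omega)$ cannot equal $k\pi/2$ because $W(0)<0$); your intermediate-value argument on the real axis --- $g(0)=\mu_1\dotsm\mu_k-\Gamma<0$ while $\hat\theta_j(\lambda)\in(0,1]$ for $\lambda\ge 0$ forces $g(\lambda)\to+\infty$ --- is more elementary, treats discrete and distributed delays uniformly, and, as you note, needs no hypothesis on the measures (it also sidesteps a sign slip in the paper's proof, where the instability case is misprinted as $\gamma_1\gamma_2\dotsm\gamma_k\le\mu_1\mu_2\dotsm\mu_k$ before concluding $W(0)<0$). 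For (iii), where the paper simply cites Proposition~1 of Cooke and van den Driessche~\cite{cook86ekvacioj} to obtain the crossing direction from $F'(\omega_0)>0$ and the occurrence of the Hopf bifurcation, you re-derive that proposition's content by hand: your identity $\bigl(\dd\lambda/\dd\tau\bigr)^{-1}=-\lambda^{-1}\bigl(\sum_j(\lambda+\mu_j)^{-1}+\tau\bigr)$ and the evaluation $\Re\bigl(\dd\lambda/\dd\tau\bigr)^{-1}=\sum_j(\mu_j^2+\omega_0^2)^{-1}>0$ both check out, and uniqueness of $\omega_0$ together with $W(0)=\mu_1\dotsm\mu_k-\Gamma>0$ (since $\Gamma<0$) rules out a root at $0$ and resonances, so your monotone root-counting yields both alternatives of (iii). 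The citation buys brevity; your computation buys self-containedness. Two small points to make explicit in a final write-up: simplicity of the root $i\omega_0$ (i.e. $W'(i\omega_0)\neq 0$), which the implicit differentiation presupposes, should be verified (it follows from the same transversality computation), and passing from the linearisation to the nonlinear statement uses the standard principle of linearised (in)stability, which requires $f_j(t,x)=\gamma_j x+o(|x|)$ uniformly in $t$ --- an assumption the paper glosses over in exactly the same way.
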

\begin{proof}
	Stability assertion from points (i) and (ii) follows from Theorem~\ref{thm:globalstability} directly. 

	The characteristic matrix for the trivial
	steady state of~\eqref{uklad} reads
	\[
		\begin{bmatrix}
			\lambda+\mu_1 & 0 & 0 & \ldots & 0 & -\gamma_1  \eta_1(\lambda) \\
			-\gamma_2 \eta_2(\lambda) & \lambda+\mu_2 & 0 &\ldots & 0 & 0 \\
			0 & -\gamma_3\eta_3(\lambda) & \lambda+\mu_3 & \ldots & 0 & 0 \\
			\vdots & \vdots & \vdots & \ddots & \vdots  &\vdots \\
			0 & 0& 0& \ldots & -\gamma_k\eta_k(\lambda) & \lambda+\mu_k
		\end{bmatrix},
	\]
	where
	\[
		\eta_j(\lambda) = \int_{-\tau}^0 \theta(s)\e^{-s\lambda}\dd s.
	\]
	The characteristic function reads
	\[
		W(\lambda) = \prod_{j=1}^k (\lambda+\mu_j) - \prod_{j=1}^k \gamma_j\eta_j(\lambda).
	\]
	Note, if $\gamma_1\gamma_2\dotsm\gamma_k>0$ and $\gamma_1\gamma_2\dotsm\gamma_k\le\mu_1\mu_2\dotsm\mu_k$ 
	we have 
	$W(0) <0$ because $\eta_j(0)=1$, $j=1,2,\dotsc,k$. In the case without delay (that is if 
	all $\theta_j$ are concentrated at $0$), 
	we immediately deduce that $W$ has a~positive real root and the trivial steady state is unstable. 
	For the case with delay we use the Mikhailov Criterion (see~\cite{uf04jbs}). To get stability we require
	that the change of argument of the vector $W(i\omega)$ while $\omega$ changes from $0$ to $+\infty$ is
	equal to $k\pi/2$, which is impossible because $W(0)<0$ and argument of $W(i\omega)$ tends to $k\pi/2$ 
	as $\omega\to+\infty$.
	
	Now, we consider the case $\gamma_1\gamma_2\dotsm\gamma_k<0$ and 
	$|\gamma_1\gamma_2\dotsm\gamma_k|>\mu_1\mu_2\dotsm\mu_k$.  
	If each of the measures $\theta_j$, $j=1,2,\dotsc,k$, is concentrated at the single point $\tau_j$, then 
	$\eta_j(\lambda) = \e^{-\lambda\tau_i}$ and 
	\begin{equation}\label{char:fun}
		W(\lambda) =(\lambda+\mu_1)(\lambda+\mu_2)\dotsm(\lambda+\mu_k) - 
					\gamma_1\gamma_2\dotsm\gamma_k\e^{-\lambda\tau}.
	\end{equation}
	Note, if stability change occurs, there exists a~purely imaginary root $i\omega_0$ of $W(\lambda)$,
	with $\omega_0\ge 0$. This is equivalent to existence of a~
	positive root of the function 
	\begin{equation}\label{fun:pom:F}
  	  \begin{split}
		F(\omega) &= \bigl|(i\omega+\mu_1)(i\omega+\mu_2)\dotsm(i\omega+\mu_k)\bigr|^2 - 
			\bigl(\gamma_1\gamma_2\dotsm\gamma_k\bigr)^2\\
			&=\bigl(\omega^2+\mu_1^2\bigr)\bigl(\omega^2+\mu_2^2\bigr)\dotsm
			  \bigl(\omega^2+\mu_k^2\bigr) -\bigl(\gamma_1\gamma_2\dotsm\gamma_k\bigr)^2
	  \end{split}
	\end{equation}
	The function $F$ is a~polynomial of the degree $2k$. It is easy to see that it 
	has exactly one positive root $\omega_0>0$ if $|\gamma_1\gamma_2\dotsm\gamma_k|>\mu_1\mu_2\dotsm\mu_k$.
	Indeed, $F(0)<0$ and $F'(\omega)>0$ for all $\omega\ge0$.
	Proposition~1 from~\cite{cook86ekvacioj} ensures that if $F'(\omega_0)>0$, eigenvalues
	cross imaginary axis from left to right. Thus, if the steady state is unstable
	for $\tau=0$, it remains unstable for all $\tau>0$ and if it is stable for $\tau=0$, it 
	loses its stability at some point $\tau_{\kryt}>0$ and remains unstable. 
	Proposition~1 from~\cite{cook86ekvacioj} implies also that in this case the Hopf bifurcation
	occurs. 
\end{proof}
%
%{\color{red}
%\begin{rem}
%	Necessary condition for the function $f_j$ to fulfil assumptions of Proposition~\ref{prop:sharp} is
%	that $f_j$ has an inflection point at $0$. However, if it is not the case, and 
%	$|\gamma_1\gamma_2\dotsm\gamma_k|<\mu_1\mu_2\dotsm\mu_k$, then there exist numbers 
%	$|\alpha_1\alpha_2\dotsm\alpha_k|\le\mu_1\mu_2\dotsm\mu_k$, such that each $f_j$ fulfil
%	assumptions of Theorem~\ref{thm:globalstability} on some positively invariant neighbourhood 
%	of~$0$. 
%\end{rem}
%TEGO~NIE ZALAPUJE! --- chyba trzeba wywalic}

\section{Applications} \label{sec:appl}

In this section we consider two particular examples of models to which the theory developed 
in the previous section can be applied. First, we consider a~signalling pathway with feedback. 
We assume that a~sequence of reactions is such that each one triggers the next one and the last 
one suppresses (or inhibits) an external production of the first chemical 
(see left-hand side panel of Fig.~\ref{fig:hes1} for the scheme of reactions). The second 
example would be the model of Hes1 gene expression proposed by Monk~\cite{monk03currbiol}. 

\subsection{Signalling pathway model} 

Let $\alpha_j$, $j=2,3\dotsc,k$, be positive constants and $f: [0,+\infty)\to\R^+$
be Lipschitz continuous and bounded. Consider the model of $k$ reactions such that 
the chemical $X_j$ induces production of the chemical $X_{j+1}$, $j=1,2,\dotsc,k-1$, while 
the chemical $X_k$ affects  production of the chemical $X_1$ according to the function $f$ (increasing 
function $f$ models activation, while decreasing one models inhibition). We assume each 
reaction is affected by distributed time delay described by the distribution $\theta_j$,
$j=1,2,\dotsc,k$. Denoting by $x_j$, $j=1,2,\dotsc,k$, concentrations of 
the chemicals $X_j$, we arrive at the following model
\begin{equation}\label{model:pathway}
	\begin{split} 
		\dot x_1(t) &= \int_{-\tau}^0 \theta_0(s)f(x_k(t+s))\dd s - x_1(t) , \\
		\dot x_j(t) &= \alpha_j\int_{-\tau}^0 \theta_j(s) x_{j-1}(t+s)\dd s - \mu_j x_j(t), \;\; j=2,3,\dotsc,k,
	\end{split}
\end{equation}
where $\mu_j$ are degradation rates for the chemicals $X_j$, and $\alpha_j>0$ 
denote production rates. We do not lose generality by assuming 
$\mu_1=1$, since we can always rescale time in an appropriate manner. We close the system 
by imposing initial condition 
\begin{equation}\label{ic:pathway}
	x_j(s) = \varphi_j(s)\ge 0, \;\;\;\; s\in [-\tau,0],\;\;\;\;  \varphi\in\Cb(\R), \;\;\;\; j=1,2,\dotsc,k. 
\end{equation}

\begin{prop}\label{prop:positiv}
	For any positive bounded and globally Lipschitz continuous function $f$ there exists 
	a unique solution to~\eqref{model:pathway} with initial data~\eqref{ic:pathway}, which is non-negative 
	and defined for all  $t\geq 0$. 
\end{prop}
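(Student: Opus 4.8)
The plan is to reduce the statement to Theorem~\ref{thm:ex} after extending $f$ to the whole real line, and then to establish non-negativity by an inductive use of the variation-of-constants formula. The delicate point is that $f$ is \emph{a~priori} defined merely on $[0,+\infty)$, so evaluating $f(x_k(t+s))$ presupposes $x_k\ge 0$, which is exactly what we are trying to prove; I would break this circularity at the outset by extending $f$ to $\tilde f\colon\R\to\R$ with $\tilde f(x)=f(0)$ for $x<0$ (as in the proof of Corollary~\ref{wn:mniejszy}). The extension $\tilde f$ stays positive, bounded and globally Lipschitz, and system~\eqref{model:pathway} with $f$ replaced by $\tilde f$ is a~special case of~\eqref{uklad}, with $f_1(t,x)=\tilde f(x)$, $\mu_1=1$ and $f_j(t,x)=\alpha_j x$ for $j=2,3,\dotsc,k$. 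Since each of these nonlinearities is globally Lipschitz in the second variable, Theorem~\ref{thm:ex} immediately yields a~unique solution of the extended problem with data~\eqref{ic:pathway}, defined for all $t\ge 0$.

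Next I would show that this solution is non-negative, the key observation being that positivity of $\tilde f$ decouples the sign of $x_1$ from that of $x_k$ and so turns the cyclic dependence of the system into an acyclic induction. Writing the first equation in integral form,
\[
	x_1(t) = \e^{-t}\varphi_1(0) + \int_0^t \e^{-(t-u)}\!\int_{-\tau}^0 \theta_0(s)\tilde f\bigl(x_k(u+s)\bigr)\dd s\,\dd u,
\]
and using $\varphi_1(0)\ge 0$ together with $\theta_0\ge 0$ and $\tilde f\ge 0$, I obtain $x_1(t)\ge 0$ for all $t\ge 0$ with no information on the sign of $x_k$. For $j=2,3,\dotsc,k$ I would then argue by induction: assuming $x_{j-1}\ge 0$ on $[0,+\infty)$ and recalling $\varphi_{j-1}\ge 0$ on $[-\tau,0]$, the analogous formula
\[
	x_j(t) = \e^{-\mu_j t}\varphi_j(0) + \alpha_j\int_0^t \e^{-\mu_j(t-u)}\!\int_{-\tau}^0 \theta_j(s) x_{j-1}(u+s)\dd s\,\dd u
\]
has a~non-negative integrand, since splitting the inner integral at $u+s=0$ gives an initial-data part and a~solution part that are both non-negative, while $\alpha_j>0$ and $\theta_j\ge 0$; hence $x_j(t)\ge 0$.

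Finally I would remove the extension. Because the solution satisfies $x_k(t)\ge 0$ for all $t\ge-\tau$, we have $\tilde f\bigl(x_k(t+s)\bigr)=f\bigl(x_k(t+s)\bigr)$ throughout, so the solution of the extended problem is in fact a~solution of the original problem~\eqref{model:pathway}; conversely any solution of~\eqref{model:pathway} keeps $x_k$ in the domain of $f$ and thus solves the extended system, so uniqueness transfers from Theorem~\ref{thm:ex}. I expect the main obstacle to be precisely the bookkeeping that legitimises the induction, namely checking that the ordering $x_1\to x_2\to\dotsm\to x_k$ is genuinely acyclic once positivity of $f$ has removed $x_1$'s dependence on the sign of $x_k$; the global existence and uniqueness are then a~routine invocation of Theorem~\ref{thm:ex}, and the non-negativity reduces to a~comparison/variation-of-constants argument.
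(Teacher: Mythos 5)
Your proof is correct and takes essentially the same route as the paper: existence and uniqueness via Theorem~\ref{thm:ex}, and non-negativity from positivity of $f$ and $\theta_j$, which the paper compresses into the differential inequality $\dot x_j \ge -\mu_j x_j$ --- exactly your variation-of-constants induction in abbreviated form. Your preliminary extension of $f$ to the whole line and the explicit acyclic ordering $x_1\to x_2\to\dotsm\to x_k$ simply make rigorous the domain and induction details that the paper's two-line proof leaves implicit.
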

\begin{proof}
	Theorem~\ref{thm:ex} implies existence and uniqueness. It remains to prove non-negativity. However, 
	due to positivity of $f$ and $\theta_j$ we can estimate $\dot x_j \ge -\mu_j x_j$ which
	yields desired assertion. 
\end{proof}

An easy observation is the following. 
\begin{prop}\label{prop:ss}
	Let 
	\begin{equation}\label{defdelta}
		\delta_k = 1,\quad \delta_j = \delta_{j+1}\frac{\mu_{j+1}}{\alpha_{j+1}}, 
		\;\; j=1,2,\dotsc,k-1.
	\end{equation}
	If $X = (\bar x_1, \bar x_2, \dotsc, \bar x_k)$ is a~positive steady state of the 
	system~\eqref{model:pathway}, then $\bar x_j$, $j=1,2,\dotsc,k$, fulfil
	\begin{equation}\label{rown:ss}
		f(\bar x_k ) = \delta_1 \bar x_k,\quad 
		\bar x_j = \delta_{j}\bar x_k, \;\; j=1,2,\dotsc,k-1,
	\end{equation}	
\end{prop}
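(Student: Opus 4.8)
The plan is to exploit the fact that at a steady state the solution is constant in time, so that every delayed argument collapses. First I would set $x_j(t)\equiv\bar x_j$ for all $t$ in the system~\eqref{model:pathway}. Then each integrand $\theta_j(s)x_{j-1}(t+s)$ becomes $\theta_j(s)\bar x_{j-1}$, and the integrand $\theta_0(s)f(x_k(t+s))$ becomes $\theta_0(s)f(\bar x_k)$. Since each $\theta_j$ (and $\theta_0$) is a probability measure we have $\int_{-\tau}^0\theta_j(s)\,ds=1$, so the integral terms reduce to $\bar x_{j-1}$ and to $f(\bar x_k)$ respectively, while the left-hand sides vanish. Recalling that we have set $\mu_1=1$, this yields the purely algebraic system $\bar x_1=f(\bar x_k)$ together with $\mu_j\bar x_j=\alpha_j\bar x_{j-1}$ for $j=2,\dots,k$.

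Next I would rewrite the relations for $j\ge 2$ as $\bar x_{j-1}=\frac{\mu_j}{\alpha_j}\bar x_j$ and solve this recursion backwards, starting from $\bar x_k$. Comparing with the definition~\eqref{defdelta} of the numbers $\delta_j$, a short backward induction on $j$ gives $\bar x_j=\delta_j\bar x_k$ for $j=k-1,k-2,\dots,1$. The base case $\bar x_{k-1}=(\mu_k/\alpha_k)\bar x_k=\delta_{k-1}\bar x_k$ follows from $\delta_k=1$, and the inductive step uses $\delta_j=\delta_{j+1}\mu_{j+1}/\alpha_{j+1}$, which matches the recursion $\bar x_j=(\mu_{j+1}/\alpha_{j+1})\bar x_{j+1}$ exactly.

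Finally I would substitute $\bar x_1=\delta_1\bar x_k$ into the first steady-state equation $\bar x_1=f(\bar x_k)$, obtaining $f(\bar x_k)=\delta_1\bar x_k$. Together with the relations $\bar x_j=\delta_j\bar x_k$ just derived, this is precisely the assertion~\eqref{rown:ss}. I do not expect any genuine obstacle: the only point requiring care is the normalization of the measures $\theta_j$ and $\theta_0$, which is exactly what makes the delayed integrals collapse to the values of the constant state; the rest is elementary backward substitution in the linear chain.
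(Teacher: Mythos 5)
Your proof is correct and is exactly the intended argument: the paper states Proposition~\ref{prop:ss} as ``an easy observation'' with no written proof, and your computation---constant solutions collapse the delayed integrals because $\int_{-\tau}^0\theta_j(s)\,\dd s=1$, followed by backward substitution through the chain $\bar x_{j-1}=(\mu_j/\alpha_j)\bar x_j$ using $\mu_1=1$---is precisely the omitted routine verification. No gaps; your attention to the normalization of the measures is the one point worth making explicit, and you made it.
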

As we can always rescale the $k$-th variable by $\bar x_k$, 
without loss of generality we may assume that $\bar x_k=1$. 
Then the steady state is of the form
\begin{equation}\label{ss}
	X = \bigl( \delta_1,\; \delta_2, \dotsc, \delta_{k-1}, 1\bigr),
\end{equation}
where $\delta_j$, $j=1,2,\dotsc,k-1$, are given by~\eqref{defdelta}. 
\begin{prop}\label{prop:model:pathway}
	Let us assume the hypothesis of Proposition~\ref{prop:positiv} hold. Moreover, 
	assume that $f(1) = \delta_1$. If the point $X\in\R^{k}$ given by~\eqref{ss} is 
	the unique steady state of~\eqref{model:pathway}, 
	\[
		\bigl|f(x)-f(1)\bigr| < \alpha_1 |x - 1| , \quad \text{ for } \quad x>0, \;\; x\neq 1,
	\]
	and $\alpha_1\le \delta_1$, 
	then the steady state $X$ is globally asymptotically stable in 
	$\Cb\bigl([0,+\infty)^{k}\bigr)$. 
\end{prop}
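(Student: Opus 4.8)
The plan is to translate the positive equilibrium $X$ to the origin and then recognise the shifted system as an instance of the canonical form~\eqref{uklad}, so that Theorem~\ref{thm:globalstability}, in the guise of Corollary~\ref{wn:mniejszy} (needed because $f$ is defined only on $[0,+\infty)$), applies. Concretely, I would set $u_j(t)=x_j(t)-\delta_j$ for $j=1,\dots,k$, recalling $\delta_k=1$. For $j\ge 2$, substituting $x_{j-1}=u_{j-1}+\delta_{j-1}$, $x_j=u_j+\delta_j$ and using both $\int_{-\tau}^0\theta_j\,\dd s=1$ and the steady-state identity $\alpha_j\delta_{j-1}=\mu_j\delta_j$ (which is exactly~\eqref{defdelta}) yields
\[
 \dot u_j(t)=\alpha_j\int_{-\tau}^{0}\theta_j(s)\,u_{j-1}(t+s)\,\dd s-\mu_j u_j(t),\qquad j=2,\dots,k .
\]
For $j=1$, using $f(1)=\delta_1$ and $\mu_1=1$ one gets
\[
 \dot u_1(t)=\int_{-\tau}^{0}\theta_0(s)\bigl(f\bigl(u_k(t+s)+1\bigr)-f(1)\bigr)\,\dd s-u_1(t).
\]
Hence the shifted system is~\eqref{uklad} with $f_j(t,y)=\alpha_j y$ for $j\ge2$ and $f_1(t,y)=f(y+1)-f(1)$, all of which vanish at $y=0$.

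Next I would gather the hypotheses. Since $f$ is bounded and globally Lipschitz, Proposition~\ref{prop:positiv} gives a unique non-negative solution defined for all $t\ge0$ and, in particular, invariance of $[0,+\infty)^k$; this invariance is what permits the use of Corollary~\ref{wn:mniejszy} with the closed intervals $J_j=[-\delta_j,+\infty)$ (the image of $[0,+\infty)$ under the shift). I would then check the growth bounds~\eqref{warunkig}: for $j\ge 2$ one has the equality $|f_j(t,y)|=\alpha_j|y|$, while for $j=1$ the feedback hypothesis $|f(x)-f(1)|<\alpha_1|x-1|$ for $x>0$, $x\ne1$ becomes, after the substitution $x=y+1$, precisely the strict bound $|f_1(t,y)|<\alpha_1|y|$ for $y\ne0$ on the relevant range $y>-1$. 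Thus the constants appearing in~\eqref{warunkig} may be taken to be the given numbers $\alpha_j$.

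The arithmetic heart of the argument is the product condition of Theorem~\ref{thm:globalstability}. Unwinding~\eqref{defdelta} gives $\delta_1=\prod_{j=2}^{k}\mu_j/\alpha_j$, so, since $\mu_1=1$, the hypothesis $\alpha_1\le\delta_1$ is equivalent to $\alpha_1\alpha_2\dotsm\alpha_k\le\mu_2\dotsm\mu_k=\mu_1\mu_2\dotsm\mu_k$, which is exactly the inequality $\alpha_1\dotsm\alpha_k\le\mu_1\dotsm\mu_k$ required in Theorem~\ref{thm:globalstability}. With the bounds~\eqref{warunkig} and this inequality verified, Corollary~\ref{wn:mniejszy} yields global asymptotic stability of $0$ for the shifted system, i.e. of $X$ for~\eqref{model:pathway} in $\Cb\bigl([0,+\infty)^{k}\bigr)$, as claimed.

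I expect the difficulties to be bookkeeping rather than conceptual. The two places needing care are: (i) the passage to the canonical form, in particular confirming that $f_j(t,0)=0$ and that the strict inequality for the feedback survives both the translation and the constant extension used in Corollary~\ref{wn:mniejszy} (the only borderline point being $y=-1$, i.e. $x_k=0$, where continuity gives at worst equality and which lies far from the contracting neighbourhood of $0$, so it does not disturb the strong-attractor construction of Theorem~\ref{thm:golbal}); and (ii) the correct invocation of Corollary~\ref{wn:mniejszy}, for which the invariance of the physical domain $[0,+\infty)^k$ supplied by Proposition~\ref{prop:positiv} is indispensable, since $f$, and hence $f_1$, is defined only there. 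Finally, I would remark that the uniqueness assumption on $X$ is in fact automatic: by Proposition~\ref{prop:ss} any positive steady state satisfies $f(c)=\delta_1 c$ for some $c>0$, and if $c\ne1$ then $\delta_1|c-1|=|f(c)-f(1)|<\alpha_1|c-1|\le\delta_1|c-1|$, a contradiction; hence $c=1$.
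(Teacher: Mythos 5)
Your proposal is correct and follows essentially the same route as the paper, whose entire proof consists of noting the invariance of $[0,+\infty)^k$, observing that $\alpha_1\le\delta_1$ is equivalent to $\alpha_1\alpha_2\dotsm\alpha_k\le\mu_2\mu_3\dotsm\mu_k$ (with $\mu_1=1$), and citing Corollary~\ref{wn:mniejszy}; you simply make explicit the translation $u_j=x_j-\delta_j$, the verification of~\eqref{warunkig}, and the constant extension that the paper leaves implicit. Your two additions --- the observation that the uniqueness hypothesis is automatic, and the flagged borderline point $x_k=0$ (where the paper's strict bound may degenerate to equality after the extension, harmless except possibly when $\alpha_1=\delta_1$) --- go beyond the paper's proof, which does not address either point.
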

\begin{proof}
	Note, that the set $[0,+\infty)^{k}$ is invariant with respect to the evolution of
	system~\eqref{model:pathway}. As the inequality $\alpha_1\le \delta_1$ is equivalent to 
	$\alpha_1\alpha_2\dotsm\alpha_k\le \mu_2\mu_3\dotsm\mu_k$, the assertion of Proposition~\ref{prop:model:pathway} is a~direct 
	consequence of Corollary~\ref{wn:mniejszy}. 
\end{proof}

\begin{prop}\label{prop:mal}
	Assume that the function $f$ fulfils the following conditions
	\begin{enumerate}[({A}1)]
		\item $f$ is a~$C^2$-class decreasing function, $f(x)>0$ for all $x\in[0,+\infty)$;
		\item there exists $x_c\ge 0$ such that $f$ is convex for $x>x_c$ and 
			it is concave for $0<x<x_c$;
		\item $f(1) = \delta_1$.
	\end{enumerate}
	Define the function 
	\[
		g(x) = (x-1)f'(x) - \bigl(f(x)-f(1)\bigr).
	\]
	Let $x_0$ be defined in the following way. If $g(0)>0$, then $x_0=0$.
	If $g(0)\le 0$ and $x_c=1$ then $x_0=1$, while if $x_c\neq 1$, then 
	$x_0$ is a~positive root of $g(x)$ different from $1$. Then the following 
	statements are true
	\begin{enumerate}[(i)]
		\item There exists exactly one positive steady state of the system~\eqref{model:pathway} given by~\eqref{ss}.
		\item The value of $x_0$ is uniquely defined.
		\item If one of the following statements 
			holds
		\begin{enumerate}[(a)]
			\item $x_0>0$ and $|f'(x_0)|<\delta_1$; 
			\item $x_0 = x_c$ and $|f'(x_0)|\le\delta_1$; 
			\item $x_0=0$ and $f(0)-f(1)<\delta_1$, 
		\end{enumerate}
 then the steady state $X$ is globally asymptotically stable in 
			$\Cb\bigl([0,+\infty)^k\bigr)$
	\end{enumerate}
\end{prop}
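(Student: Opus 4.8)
The plan is to recast the whole statement in terms of the chord-slope function
\[
  \phi(x) = \frac{f(x)-f(1)}{x-1}, \qquad x\ge 0,\; x\neq 1,
\]
extended continuously by $\phi(1)=f'(1)$. Because $f$ is decreasing, $\phi(x)<0$ for every $x\neq1$, and Proposition~\ref{prop:model:pathway} applies as soon as one can pick a constant $\alpha_1\le\delta_1$ with $|\phi(x)|<\alpha_1$ for all admissible $x\neq1$; thus the proposition reduces to locating where $|\phi|$ is largest on the closed admissible domain $[0,+\infty)$. Part~(i) is immediate: by Proposition~\ref{prop:ss} a positive steady state corresponds to a positive solution of $f(\bar x_k)=\delta_1\bar x_k$, and since $f$ is positive and strictly decreasing while $x\mapsto\delta_1 x$ increases from $0$, there is a unique positive root, which by~(A3) is $\bar x_k=1$, giving the steady state~\eqref{ss}.

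The computational core is the identity
\[
  \phi'(x) = \frac{g(x)}{(x-1)^2}, \qquad g(x)=(x-1)f'(x)-\bigl(f(x)-f(1)\bigr),
\]
so the critical points of $\phi$ other than $x=1$ are exactly the zeros of $g$, and at any such zero the chord is tangent to the graph, whence $\phi(x_0)=f'(x_0)$ and $|\phi(x_0)|=|f'(x_0)|$. For part~(ii) I would differentiate once more, $g'(x)=(x-1)f''(x)$, and combine this with the sign of $f''$ from~(A2) ($f''\le0$ on $(0,x_c)$, $f''\ge0$ on $(x_c,+\infty)$) and with $g(1)=0$. Splitting into $x_c<1$, $x_c=1$ and $x_c>1$ and reading off the sign pattern of $g$ on each subinterval shows that $g$ has at most one zero different from $1$; this makes $x_0$ unambiguous and, equivalently, exhibits a single global minimiser of $\phi$ on $[0,+\infty)$, located at the interior zero of $g$, at $x=1$ when $x_c=1$, or at the endpoint $x=0$ when $g(0)>0$, in agreement with the three clauses of the definition.

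Part~(iii) then follows by identifying $x_0$ with the global maximiser of $|\phi|$ and distinguishing three configurations. In case~(a) the maximum is attained at an interior point $x_0\neq1$ with value $|f'(x_0)|$, so $|\phi(x)|<\alpha_1$ for all $x\neq1$ forces $\alpha_1>|f'(x_0)|$; the strict hypothesis $|f'(x_0)|<\delta_1$ is precisely what lets one take $|f'(x_0)|<\alpha_1\le\delta_1$. In case~(b), with $x_0=x_c=1$, the value $|f'(1)|$ is approached only as $x\to1$, i.e.\ at the excluded steady-state value, so $|\phi(x)|<|f'(1)|$ holds at every admissible $x\neq1$ and the non-strict condition $|f'(1)|\le\delta_1$ suffices with $\alpha_1=|f'(1)|$. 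In case~(c) the value $f(0)-f(1)=|\phi(0)|$ is attained at the admissible point $x=0$, which belongs to the invariant domain $[0,+\infty)$, so strictness $f(0)-f(1)<\delta_1$ is again needed. In all three cases $f$ is bounded and, by~(A1)--(A2), globally Lipschitz (its derivative attains its extreme value at $x_c$), so Proposition~\ref{prop:model:pathway} yields global asymptotic stability in $\Cb([0,+\infty)^k)$.

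The crux, and the reason the three clauses carry different strict and non-strict inequalities, is exactly this bookkeeping: in each geometric configuration one must decide whether the extremal chord slope is realised at a genuine admissible point -- forcing strict inequality, as in~(a) and~(c) -- or only in the limit towards the steady state $x=1$, which allows equality, as in~(b). The secondary difficulty lies in the degenerate situations, such as a chord tangent at the inflection point $x_c$ or $f''$ vanishing on an entire subinterval, where $g$ need not be strictly monotone; there I would argue directly from the monotonicity of $\phi$ rather than from isolated zeros of $g$, always keeping the admissible domain as the closed half-line that contains $x=0$.
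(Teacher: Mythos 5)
Your proposal is correct, and for parts (i) and (ii) it coincides with the paper's proof: the same monotonicity argument for uniqueness of the steady state, and the same sign analysis of $g'(x)=(x-1)f''(x)$ split into the cases $x_c<1$, $x_c=1$, $x_c>1$. Where you genuinely differ is part (iii). The paper argues geometrically: using the convexity hypotheses it shows that the line through $(1,f(1))$ with slope $f'(x_0)$ (or the secant through $(0,f(0))$ and $(1,f(1))$ in case (c)) bounds the graph of $f$ on either side of $x=1$, and then invokes Proposition~\ref{prop:model:pathway}. You instead work with the chord-slope function $\phi(x)=\bigl(f(x)-f(1)\bigr)/(x-1)$ and the identity $\phi'(x)=g(x)/(x-1)^2$, locate the global maximiser of $|\phi|$ at $x_0$, and read the strict versus non-strict inequalities off whether the extremal slope is attained at an admissible point (cases (a), (c)) or only in the limit $x\to 1$ (case (b)). This repackaging buys something concrete: the paper's justification of (iii.c) asserts that $x_0=0$ forces $f$ to be convex on all of $(0,+\infty)$, which is literally true only when $x_c=0$, whereas $g(0)>0$ is compatible with $0<x_c<1$; your mechanism ($g>0$ on $(0,+\infty)\setminus\{1\}$, hence $\phi$ strictly increasing, $|\phi|$ strictly decreasing, so $|\phi(x)|<|\phi(0)|=f(0)-f(1)$ for all $x>0$) covers that subcase uniformly without any convexity claim, and it also makes transparent why (b) tolerates equality $|f'(1)|=\delta_1$ while (a) and (c) require strict inequality. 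You additionally verify, which the paper leaves implicit, that $f$ is bounded and globally Lipschitz (since $|f'|\le |f'(x_c)|$), so the hypotheses of Proposition~\ref{prop:model:pathway} are met. One caveat you share with the paper: both proofs need strictness of the chord inequality, which degenerates if $f''\equiv 0$ on an interval adjacent to the relevant point (e.g., $f$ affine near $1$ in case (b) with exact equality $|f'(1)|=\delta_1$, where $|\phi|$ is constant, not strictly decreasing, on that interval); your closing remark about "arguing directly from the monotonicity of $\phi$" does not by itself restore strictness there, although in cases (a) and (c) the strict hypotheses let you choose $\alpha_1$ strictly above the extremal slope, which absorbs the degeneracy exactly as in the paper.
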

\begin{proof}
	Note, the right hand-side of Eq.~\eqref{rown:ss} is an increasing function of $x$, which
	is $0$ at $x=0$, and tends to $+\infty$ as $x\to+\infty$, while 
	$f$ is a~decreasing function of $x$. Obviously $f(0)>0$, which implies assertion 
	of the point (i). 
	
	The first derivative of $g$ reads
	\[
		g'(x) = (x-1)f''(x).
	\]
	If $x_c=0$, then $f$ is convex for all $x>0$ and therefore $g$ is decreasing on 
	$(0,1)$ and increasing on $(1,+\infty)$, so $g(0)>0$ and $g$ has no positive zeros different from $x=1$. 
	Similarly, if $x_c=1$ $g$ also has no positive zeros different from $x=1$.
	Assume now, $x_c>0$ and $x_c\neq 0$. 
	Due to the assumptions of the proposition, $g'(x)$ has exactly two positive 
	roots: $x=1$, and $x=x_c$, and $g'(0)>0$ because $f$ is concave at $x=0$. 
	As $x_c>0$ is an inflection point of $f$,  $g'(x)>0$ for 
	$x\in\Bigl[0,\min\{x_c,1\}\Bigr]\cup\Bigl[\max\{x_c,1\},+\infty\Bigr)$. Assume $0<x_c<1$.
	The function $g$ is decreasing on $(x_c,1)$, increasing on $(0,x_c)\cup(1,+\infty)$, 
	and $g(1)=0$. Therefore, there is no root of $g$ on $(x_c,1)\cup(1,+\infty)$. Thus, if 
	$g(0)\le0$, then there exists exactly one root of $g$ different from $1$, $x_0\in[0,x_c)$. For
	$x_c>1$, similar argument yields that there exists exactly one root of $g$, $x_0>x_c$. 
	This completes the proof of part (ii). 
	
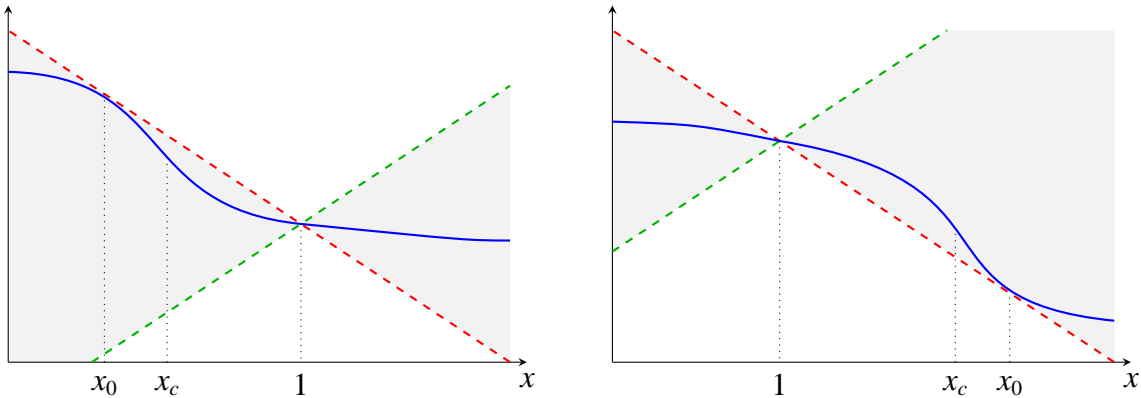
\begin{figure}[!hbt]
\centerline{
%	\begin{tikzpicture}[>=stealth,xscale=5,yscale=0.4]
%		\draw[->] (0,0) -- (1.3,0);
%		\draw[->] (0,0) -- (0,8);
%		\draw[red,thick] (0,7.49073) -- (1.15407,0);
%		\draw[thick,green!70!black] (0.845934,0) -- (1.3,2.94722);
%		\draw[blue,domain=0:1.3,thick,smooth] plot (\x, {1.16/(0.16+\x^2)});
%	\end{tikzpicture}
	\begin{tikzpicture}[>=stealth,scale=1.1]
		\fill[gray!10!white] (0,0) -- (0,4) -- (6,0) -- (6,3.333) -- (1,0) --cycle;
		\draw[->] (0,0) -- (6.2,0) node[below] {$x$};
		\draw[->] (0,0) -- (0,4.3);
		\draw[dashed,red,thick] (6,0) -- (0,4);
		\draw[dashed,thick,green!70!black] (1,0) -- (6,3.3333);
		\draw[blue,thick,smooth] (0,3.5) .. controls (2,3.45) and (1.5,1.8667) .. (3.5,1.6667) ..
				controls (5.5,1.4667) .. (6,1.4667);
		\draw[dotted] (3.5,1.6667) -- +(0,-1.6667) node[below] {$1$};
		\draw[dotted] (1.15,0) node[below=1pt] {$x_0$} -- +(0,3.233);
		\draw[dotted] (1.9,0) node[below=1pt] {$x_c$} -- +(0,2.5);
	\end{tikzpicture}
	\;\quad \;
%	\begin{tikzpicture}[>=stealth,xscale=0.3,yscale=2]
%		\draw[->] (0,0) -- (22,0);
%		\draw[->] (0,0) -- (0,1.6);
%		\draw[red,thick] (0,1.04977) -- (21.0907,0);
%		\draw[thick,green!70!black] (0,0.950226) -- (13.0544,1.6);
%		\draw[blue,domain=0:22,thick,smooth] plot (\x, {122/(121+\x^2)});
%	\end{tikzpicture}
	\begin{tikzpicture}[>=stealth,scale=1.1]
		\fill[gray!10!white] (0,1.3333) -- (0,4) -- (6,0) -- (6,4) -- (4,4)  --cycle;	
		\draw[->] (0,0) -- (6.2,0) node[below] {$x$};
		\draw[->] (0,0) -- (0,4.3);
		\draw[dashed,red,thick] (6,0) -- (0,4);
		\draw[dashed,thick,green!70!black] (0,1.3333) -- (4,4);
		\draw[blue,thick,smooth] (0,2.9) .. controls (1,2.8667) .. (2,2.6667) ..
		controls (5,2.1667) and (3.5,0.75) .. (6,0.5);
		\draw[dotted] (2,2.6667) -- +(0,-2.6667) node[below] {$1$};
		\draw[dotted] (4.75,0) node[below=1pt] {$x_0$} -- +(0,0.8333);
		\draw[dotted] (4.1,0) node[below=1pt] {$x_c$} -- +(0,1.6);
	\end{tikzpicture}
}
\caption{Sketch of the graph of the function $f$ (solid line) 
	and a~cone $y<|f'(x_0)||x-1|$ (grey area).
	Cases: $0<x_c<1$ (left) and $1<x_c$ (right).\label{fig.ogr}}
\end{figure}		
	
	Now, we prove part (iii). We find the slope $\alpha_1$ of the cone consisting 
	the graph of the function $f$ and we apply Proposition~\ref{prop:model:pathway}.
	
	If $x_0>0$ (point (iii.a)), as $g(x_0)=0$, the straight line passing through $(1,f(1))$ and with
	the slope 
	$f'(x_0)$ is tangent to the graph of the function $f$ at the point $(x_0,f(x_0))$. 
	Due to convexity assumptions on $f$, this line is above the graph of $f$ for $x<1$
	and below it for $x>1$ (see Fig.~\ref{fig.ogr}). Proposition~\ref{prop:model:pathway}
	with any $\alpha_1>|f'(x_0)|$,  yields 	global stability of the steady state.  

	If $x_c=1$ (point (iii.b)), then $f$ is concave for $x<1$ and convex for $x>1$. This, 
	together with the fact that $f$ is decreasing, implies 
	$|f(x)-f(1)|<|f'(1)||x-1|$ and Proposition~\ref{prop:model:pathway}
	with $\alpha_1=|f'(1)|$ yields global stability of the steady state. 
	
	If $x_0=0$ (point (iii.c)), then the function $f$ is convex for all $x>0$. The line passing 
	through the points $(0,f(0))$ and $(1,f(1))$ has the slope $\alpha_1=f(0)-f(1)$. Moreover, 
	because of convexity of $f$, the graph of $f$ is below it for $x\in(0,1)$ and 
	above for $x>1$. Hence, Proposition~\ref{prop:model:pathway} yields
	global stability of the steady state and this completes the proof.
\end{proof}

\subsection{Hes1 gene expression model}
The model of Hes1 gene expression proposed in 2003 by Monk~\cite{monk03currbiol} 
reads
\begin{equation}\label{hes_sys_gen}
	\begin{split}
		\dot r(\tilde t) &= \tilde f(p(\tilde t-\tau_r)) - k_r r(\tilde t)\,,\\
		\dot p(\tilde t) &= \beta r(\tilde t) - k_p p(\tilde t)\,,
	\end{split}
\end{equation}
where $p$ and $r$ are concentrations of Hes1 and its mRNA, respectively, and 
$\tilde f$ is a~non-increasing, non-negative $C^1([0,+\infty),\R)$ class function, that 
describes negative feedback loop.
Parameters $1/k_r$ and $1/k_p$ are characteristic times for degradation
of mRNA~and Hes1 protein, respectively --- they can be also considered as mean life times of these 
molecules. Parameter $\beta$ is the protein production rate. 

For an arbitrary function $\tilde f$, after a~proper rescaling, the model~\eqref{hes_sys_gen}
is a~particular version of~\eqref{model:pathway}, and as $f$ is non-increasing, 
Proposition~\ref{prop:mal} can be used with $k=2$. Here, using this proposition, 
we derive global stability conditions for the particular type of function used in the 
literature in this context (see~ \cite{jensen03febslett}), that is Hill function.

Note, that $\tilde f$ is non-increasing and this implies that the equation
\begin{equation}\label{nastac}
\tilde f(\xi) - \frac{k_p\: k_r}{\beta} \xi = 0
\end{equation}
has a~unique positive solution. Let denote it by $\bar p$. Now, we introduce the following change of variables
\begin{equation}\label{zamiana}
	\begin{split}
		f(\xi) &= \frac{\beta}{\bar p k_r^2}\, \tilde f(\bar p \xi) \;\textrm{ for all }\; \xi\in[0,+\infty)\,,\qquad
		x(t) = \frac{\beta}{\bar p k_r} r(t), \\
		y(t) &= \frac{1}{\bar p} p(t), \qquad 
		\mu = \frac{k_p}{k_r},\qquad 
		t = k_r \tilde t, \qquad 
		\tau = k_r \tau_r,
	\end{split}
\end{equation}
where $\bar p$ is a~unique positive solution to~\eqref{nastac}. With the change of variables~\eqref{zamiana},
and allowing time delay to be distributed one, the system~\eqref{hes_sys_gen} reads
\begin{equation}\label{hes1dimles}
	\begin{split}
		\dot x(t) &= \int_{-\tau}^0\theta(s)f(y(t+s))\dd s -  x(t)\,,\\
		\dot y(t) &= x(t) - \mu y(t)\,,
	\end{split}
\end{equation}
where $\theta$ is a~probabilistic measure.  We can directly apply
Proposition~\ref{prop:model:pathway}  to the system~\eqref{hes1dimles}. 

Note, that introducing distributed delay, due to the assumption that $\int_{-\tau}^0\theta(s)\dd s =1$, we do not influence existence and the value of the steady state. 

Now, we study the particular case of $\tilde f$ considered in~\cite{jensen03febslett}, namely, 
\begin{equation}\label{fun_jen}
\tilde f(\xi) = \frac{\alpha k^h}{k^h + {\xi}^h},\quad \alpha, k>0, \quad h>1.
\end{equation}
Using definitions~\eqref{fun_jen} and \eqref{zamiana}
and the identity $\bar p = \tfrac{\beta}{k_pk_r} \tfrac{\alpha \bar p^h}{k^h+\bar p^h}$,  simple algebraic calculations lead to
\[
f(\xi) = \frac{k_p}{k_r}\cdot\frac{\left(\frac{k}{\bar p}\right)^h+1}{\left(\frac{k}{\bar p}\right)^h+{\xi}^h} =
     \mu\cdot\frac{b^h+1}{b^h+{\xi}^h}, \quad
   b=\frac{k}{\bar p}.
\]
Calculating the first and the second derivative we obtain
\[
	\begin{split}
		f'(\xi) &= -\mu \cdot\frac{\bigl(b^h+1\bigr)h\xi^{h-1}}{\left(b^h+{\xi}^h\right)^2}, \\
		f''(\xi) &= \mu\cdot\frac{h\xi^{h-2}\bigl(b^h+1\bigr)}{\bigl(b^h+\xi^h\bigr)^3}
					\Bigl((1+h)\xi^h-b^h(h-1)\Bigr).
	\end{split}
\]
Note, if $\displaystyle b=\sqrt[h]{\frac{1+h}{h-1}}$, then the point $\xi=1$ is an inflection point of $f$.
\begin{prop}\label{prop:hes1przegiecie}
	Let $h>1$ and
	\begin{equation}\label{war:hes1:przegiecie}
			\frac{k\,k_pk_r}{\alpha\beta} = \frac{h-1}{2h}\sqrt[h]{\frac{h-1}{h+1}}. 
	\end{equation}
	The positive steady state of~\eqref{hes1dimles}, with $f$ defined by~\eqref{fun_jen} and~\eqref{zamiana} 
	is globally asymptotically stable if and only if $h\le 3$. Moreover, the solution 
	$\bar p$ to~\eqref{nastac} is of the form
	\begin{equation}\label{ss:przegiecie}
		\bar p = k \sqrt[h]{\frac{h-1}{h+1}}
	\end{equation}
\end{prop}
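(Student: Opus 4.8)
The plan is to read the result off Proposition~\ref{prop:mal}: the hypothesis~\eqref{war:hes1:przegiecie} is precisely what pins the inflection point of $f$ at $x_c=1$, and then part~(iii.b) of that proposition collapses to the integer threshold $h\le 3$; the ``only if'' half is a separate linearisation (Hopf) argument. First I would settle the ``Moreover'' claim~\eqref{ss:przegiecie}, since it governs everything else. As $\tilde f$ is decreasing while $\xi\mapsto\frac{k_pk_r}{\beta}\xi$ is strictly increasing, equation~\eqref{nastac} has a unique positive root, so it suffices to verify that $\bar p=k\sqrt[h]{(h-1)/(h+1)}$ solves it under~\eqref{war:hes1:przegiecie}. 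Substituting gives $\bar p^h=k^h\frac{h-1}{h+1}$, so $\tilde f(\bar p)$ collapses to $\frac{\alpha(h+1)}{2h}$; balancing this against $\frac{k_pk_r}{\beta}\bar p$ reduces~\eqref{nastac} to the scalar relation~\eqref{war:hes1:przegiecie}, and uniqueness of the root yields~\eqref{ss:przegiecie}. The crucial by-product is that $b=k/\bar p$ satisfies $b^h=\frac{h+1}{h-1}$, so by the displayed formula for $f''$ the unique inflection point of $f$ lies exactly at $x_c=1$.

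Next I would apply Proposition~\ref{prop:mal} to~\eqref{hes1dimles}, regarded through $k=2$ as an instance of~\eqref{model:pathway} with $\delta_1=\mu$ and $f(1)=\mu=\delta_1$. The Hill function meets the structural hypotheses (A1)--(A3): it is positive and decreasing, $f''$ changes sign exactly once (at $x_c$), and $f(1)=\delta_1$. Because $h>1$ forces $f'(0)=0$ and $f$ is decreasing, we get $g(0)=f(1)-f(0)<0$, so with $x_c=1$ we land in the branch $x_0=x_c=1$. Proposition~\ref{prop:mal}(iii.b) then gives global asymptotic stability exactly when $|f'(1)|\le\delta_1=\mu$. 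A short computation from the displayed $f'$, using $b^h+1=\frac{2h}{h-1}$, gives $|f'(1)|=\mu\,\frac{h}{b^h+1}=\mu\,\frac{h-1}{2}$, so $|f'(1)|\le\mu$ is equivalent to $\frac{h-1}{2}\le 1$, i.e. $h\le 3$. This proves the ``if'' direction; because (iii.b) carries a non-strict inequality, the equality case $h=3$ is included.

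For the converse I would show that global stability fails once $h>3$ by producing local instability at a suitable delay. Shifting the steady state $(\mu,1)$ to the origin casts~\eqref{hes1dimles} with a single concentrated delay $\tau$ as the $k=2$ instance of~\eqref{uklad} with $\mu_1=1$, $\mu_2=\mu$ and linear data $\gamma_1=f'(1)<0$, $\gamma_2=1$; for $h>3$ one has $|\gamma_1\gamma_2|=|f'(1)|=\mu\frac{h-1}{2}>\mu=\mu_1\mu_2$. At $\tau=0$ the Jacobian has trace $-(1+\mu)<0$ and determinant $\mu+|f'(1)|>0$, so the origin is stable for zero delay. Following the characteristic-equation analysis in the proof of Proposition~\ref{prop:sharp}(iii), where $F(\omega)=(\omega^2+1)(\omega^2+\mu^2)-f'(1)^2$ satisfies $F(0)<0$ and $F'(\omega)>0$, hence has a single positive root with eigenvalues crossing left-to-right, the steady state loses stability at a finite $\tau_{\kryt}>0$ and is unstable for $\tau>\tau_{\kryt}$. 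For such delays it cannot be globally asymptotically stable, so $h>3$ precludes global stability.

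The work here is more bookkeeping than conceptual. I expect the two delicate points to be: (i) the clean evaluation $|f'(1)|=\mu\frac{h-1}{2}$, which is what converts the abstract bound $|f'(1)|\le\delta_1$ into the sharp integer threshold $h\le 3$; and (ii) correctly landing in the case $x_0=x_c=1$ of Proposition~\ref{prop:mal}, so that the non-strict form~(iii.b) applies and the equality $h=3$ is covered. A point worth stating explicitly is the meaning of ``only if'': the instability for $h>3$ is delay-induced via the Hopf mechanism, so the precise assertion is that the delay-independent global stability furnished by this method holds on exactly the range $h\le 3$.
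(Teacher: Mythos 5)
Your argument for the ``if'' direction coincides with the paper's own proof: verify that $\bar p$ in \eqref{ss:przegiecie} solves \eqref{nastac}, conclude $b^h=(h+1)/(h-1)$ so that $\xi=1$ is the inflection point of $f$, land in case (iii.b) of Proposition~\ref{prop:mal} (your check $g(0)=f(1)-f(0)<0$, using $f'(0)=0$ for $h>1$, is exactly what pins $x_0=x_c=1$ and secures the non-strict inequality covering $h=3$), and compute $|f'(1)|=\mu\,h/(b^h+1)=\mu(h-1)/2$, so that $|f'(1)|\le\mu$ is equivalent to $h\le 3$. Where you genuinely go beyond the paper is the ``only if'' half: the paper's proof stops at the condition extracted from Proposition~\ref{prop:mal}, which by itself only gives sufficiency, whereas you make the necessity explicit --- for $h>3$ one has $\gamma_1\gamma_2<0$ and $|\gamma_1\gamma_2|=\mu(h-1)/2>\mu=\mu_1\mu_2$, the steady state is stable at $\tau=0$ (trace $-(1+\mu)$, determinant $\mu+|f'(1)|>0$), and the crossing argument of Proposition~\ref{prop:sharp}(iii) then forces instability for $\tau>\tau_{\kryt}$, so delay-independent global stability fails. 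This completion, together with your explicit reading of the ``if and only if'' as a statement about stability uniform in the delay, is the correct way to close the claim.

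One caveat, which affects the paper's statement as much as your write-up: your own intermediate values already show that balancing $\tilde f(\bar p)=\alpha(h+1)/(2h)$ against $\frac{k_pk_r}{\beta}\bar p$ with $\bar p=k\sqrt[h]{(h-1)/(h+1)}$ yields $\frac{k\,k_pk_r}{\alpha\beta}=\frac{h+1}{2h}\sqrt[h]{\frac{h+1}{h-1}}$, which is \emph{not} the right-hand side of \eqref{war:hes1:przegiecie}; the factors $h\pm1$ there appear transposed. (Sanity check at $h=2$, $k=\alpha=1$: the inflection-at-one condition forces $\bar p=1/\sqrt3$ and hence $k\,k_pk_r/(\alpha\beta)=\tfrac{3}{4}\sqrt3\approx1.30$, whereas \eqref{war:hes1:przegiecie} gives $\approx0.14$.) So your final assertion that the balance ``reduces \eqref{nastac} to the scalar relation \eqref{war:hes1:przegiecie}'' does not hold literally, and the paper's proof makes the same unverified identification. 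Since everything downstream uses only $b^h=(h+1)/(h-1)$, the threshold $h\le3$ is unaffected, but you should either correct the constant or flag the discrepancy rather than assert the reduction.
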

\begin{proof}
	Due to~\eqref{war:hes1:przegiecie}, $\bar p$ given by~\eqref{ss:przegiecie} is the unique solution 
	to~\eqref{nastac}. Therefore, $\displaystyle b=\frac{k}{\bar p}=\sqrt[h]{\frac{1+h}{h-1}}$, and 
	the point $\xi=1$ is an inflection point of $f$. Hence, we may use Proposition~\eqref{prop:mal} getting 
	the condition
	\[
		|f'(1)| \le \mu  \; \Longleftrightarrow \; 
		\frac{h}{b^h+1} \le 1.
	\]
	Since $h>1$, an easy calculation shows inequality $h\le b^h+1$ is equivalent to $h\le 3$. 
\end{proof}

\begin{prop}\label{pro:stabb}
	Let $\bar b$ be a~unique positive solution to 
	\begin{equation}\label{war:stab:b}
		\frac{(b^h+1)h\big(\xi_0(b)\big)^{h-1}}{b^h+\big(\xi_0(b)\big)^h} = 1,
	\end{equation}
	where $\xi_0(b)$ is a~positive solution to
	\begin{equation}\label{na:xodb}
		\xi^{2h} +\xi^h\Bigl(b^h-1-h(b^h+1)\Bigr) + \xi^{h-1}h(b^h+1)-b^h = 0 ,
	\end{equation}
	such that $\xi( )=1$ only if~\eqref{na:xodb} has a~triple root at $\xi=1$. In other cases 
	as $\xi(b)$ we choose a~root of~\eqref{na:xodb} different from $1$. 
	Then, if  
	\begin{equation}\label{warnab}
		\frac{\bar b^{h+1}}{\bar b^h+1}< \frac{k\,k_pk_r}{\alpha\beta} ,
	\end{equation}
	then the positive steady state of Eq.~\eqref{hes1dimles} with $f$ given by~\eqref{fun_jen}
	and~\eqref{zamiana} is globally asymptotically stable in $\Cb\Bigl([0,+\infty)^2\bigr)$. 
\end{prop}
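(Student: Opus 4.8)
The plan is to reduce everything to Proposition~\ref{prop:mal}, whose hypotheses I would first verify for the Hill function $f(\xi)=\mu\,\tfrac{b^h+1}{b^h+\xi^h}$ with $b=k/\bar p$. This $f$ is $C^2$, strictly positive and strictly decreasing on $[0,+\infty)$; from the formula for $f''$ its sign changes exactly once, at the point $x_c$ determined by $(1+h)\xi^h=b^h(h-1)$, so $f$ is concave on $(0,x_c)$ and convex on $(x_c,+\infty)$, giving (A1)--(A2); the normalisation built into~\eqref{nastac} forces $f(1)=\mu=\delta_1$, which is (A3). Hence Proposition~\ref{prop:mal} applies and global stability reduces to its condition~(iii). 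I would then compute $g(x)=(x-1)f'(x)-\bigl(f(x)-f(1)\bigr)$ and observe that multiplying $g(x)=0$ by the positive factor $(b^h+x^h)^2/\mu$ yields precisely equation~\eqref{na:xodb}; thus the relevant root is $x_0=\xi_0(b)$. A quick check shows $g(0)=-\mu b^{-h}<0$, so the case $x_0=0$ of~(iii.c) never arises for the Hill nonlinearity, and the decisive quantity is $|f'(\xi_0(b))|$, which must be compared with $\mu$.

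Next I would translate the parameter condition~\eqref{warnab} into a condition on $b$. Evaluating~\eqref{nastac} at $\xi=\bar p=k/b$ and simplifying gives the identity $\tfrac{k\,k_pk_r}{\alpha\beta}=\psi(b)$, where $\psi(b)=\tfrac{b^{h+1}}{b^h+1}$. Differentiating, $\psi'(b)=\tfrac{b^h\bigl(b^h+h+1\bigr)}{(b^h+1)^2}>0$, so $\psi$ is strictly increasing; consequently~\eqref{warnab}, namely $\psi(\bar b)<\tfrac{k\,k_pk_r}{\alpha\beta}=\psi(b)$, is equivalent to the simple statement $b>\bar b$. By construction $\bar b$ is the unique value at which the slope at the relevant root equals the threshold, i.e.\ $|f'(\xi_0(\bar b))|=\mu$, which is exactly~\eqref{war:stab:b} read together with~\eqref{na:xodb}.

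It then remains to decide on which side of $\bar b$ global stability holds, and this is the heart of the argument. Setting $\Phi(b)=|f'(\xi_0(b))|/\mu=\tfrac{(b^h+1)h\,\xi_0^{h-1}}{(b^h+\xi_0^h)^2}$, and using the root identity $\tfrac{(b^h+1)h\,\xi_0^{h-1}}{b^h+\xi_0^h}=\tfrac{\xi_0^h-1}{\xi_0-1}$ obtained directly from~\eqref{na:xodb}, one has $\Phi=\tfrac{1}{b^h+\xi_0^h}\cdot\tfrac{\xi_0^h-1}{\xi_0-1}$. Since $\xi_0(b)$ is a simple root of~\eqref{na:xodb}, the implicit function theorem makes $\Phi$ continuous in $b$, so $\Phi-1$ keeps a constant sign on $(\bar b,+\infty)$. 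To fix that sign I would use the asymptotics of~\eqref{na:xodb}: its leading balance as $b\to+\infty$ forces $\xi_0(b)\sim(h-1)^{1/h}b$, whence $\Phi(b)=O(1/b)\to0$. Therefore $\Phi<1$ throughout $(\bar b,+\infty)$, i.e.\ $|f'(\xi_0(b))|<\mu$ for every $b>\bar b$, and Proposition~\ref{prop:mal}(iii.a) (with the inflection case~(iii.b) occurring only at $b=\bar b$) yields global asymptotic stability in $\Cb\bigl([0,+\infty)^2\bigr)$. Recalling that $b>\bar b$ is equivalent to~\eqref{warnab} closes the proof.

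The main obstacle is exactly the sign determination in the last step. The root $\xi_0(b)$ is defined only implicitly by the degree-$2h$ equation~\eqref{na:xodb} (which, moreover, always carries a spurious double root at $\xi=1$ coming from $g(1)=g'(1)=0$), so establishing that $|f'(\xi_0(b))|$ meets the level $\mu$ exactly once, and crosses it from above as $b$ increases, genuinely requires care: either a direct monotonicity computation of $\Phi'(b)$ via implicit differentiation of~\eqref{na:xodb} with a nontrivial sign bookkeeping, or---as sketched above---leaning on the asserted uniqueness of $\bar b$, the continuity of $\Phi$, and a single asymptotic evaluation to replace the monotonicity computation by an intermediate-value argument.
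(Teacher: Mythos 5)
Your proposal is correct in substance and shares the paper's skeleton --- reduction to Proposition~\ref{prop:mal}, the verification of (A1)--(A3) for the Hill function, the identification of $g(\xi)=0$ with \eqref{na:xodb} after clearing the denominator $(b^h+\xi^h)^2/\mu$, the exclusion of case (iii.c) via $g(0)=-\mu b^{-h}<0$, and the factorisation $(\xi^h-1)(\xi^h+b^h)=h(b^h+1)\xi^{h-1}(\xi-1)$, which is exactly the paper's identity \eqref{zixi} --- but it diverges at the decisive step. The paper invests its main effort in \emph{proving} that \eqref{war:stab:b} has exactly one positive solution and that its left-hand side is monotone in $b$: the substitution $z=\xi^h$, $a=b^h+1$, the implicit-function analysis of $G(z,a)$ from \eqref{na:ztoa}, the explicit inversion $a(z)=(z-1)^2/\bigl(1+(h-1)z-hz^{1-1/h}\bigr)$, and the observation that the critical quantity reduces to the increasing function $(\xi^h-1)/(\xi-1)$. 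You bypass all of this by taking the uniqueness of $\bar b$ as a hypothesis of the statement, deducing constant sign of $\Phi-1$ on $(\bar b,+\infty)$ by continuity and the intermediate value theorem, and fixing the sign with the single asymptotic evaluation $\xi_0(b)\sim(h-1)^{1/h}b$, $\Phi(b)=O(1/b)$ (both of which check out against the leading balance of \eqref{na:xodb}). This is more elementary, but it buys less: if the proposition is read as asserting, not assuming, that $\bar b$ is well defined, then your proof has a gap precisely where the paper's substitution argument sits.

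Three further points. First, your direction is right and in fact cleaner than the paper's: \eqref{warnab} is equivalent to $b>\bar b$ via the increasing map $\psi(b)=b^{h+1}/(b^h+1)=k\,k_pk_r/(\alpha\beta)$, consistent with the $h=2$ corollary (where $\bar b=\sqrt5/2$ and $\psi(\sqrt5/2)=5\sqrt5/18$) and Fig.~\ref{fig:deponh}, whereas the paper's closing paragraph contains a sign slip (``globally stable for $b<\bar b$''). Second, you silently and correctly read \eqref{war:stab:b} with $(b^h+\xi_0^h)^2$ in the denominator, as $|f'(\xi_0)|=\mu$ requires; with the single power, the identity \eqref{zixi} would turn \eqref{war:stab:b} into $(\xi_0^h-1)/(\xi_0-1)=1$, which has no positive solution for $h>1$, so the missing square is a typo that your $\Phi$ repairs. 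Third, your parenthetical placing case (iii.b) ``only at $b=\bar b$'' is misplaced: the inflection case occurs at $b_*=\bigl((h+1)/(h-1)\bigr)^{1/h}$, where $x_c=1$ and \eqref{na:xodb} has its triple root at $\xi=1$; for $h<3$ this $b_*$ lies inside $(\bar b,+\infty)$, the root is not simple there, and the implicit function theorem argument for continuity of $\Phi$ fails at that one point --- you must instead invoke the selection convention of the statement, under which $\Phi(b_*)=h/(b_*^h+1)=(h-1)/2<1$ and $\Phi$ extends continuously. Both of the last two items are easily patched, so the only genuine deficit relative to the paper is the presupposed existence and uniqueness of $\bar b$.
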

\begin{proof}
	The proof is an easy application of Proposition~\ref{prop:mal}.  
	For $h>1$ the function~$f$ has exactly one inflection point for $\xi>0$ and  
	fulfils assumptions of Proposition~\ref{prop:mal}. 
	Tedious calculations lead to the conclusion that equality $g(\xi)=0$ is equivalent 
	to~\eqref{na:xodb}.
	
	In order to show that Eq.~\eqref{war:stab:b} has exactly one positive 
	solution we make the following change of variables:
	\[
		z = \xi^h, \quad a~= b^h+1.
	\]
	Then Eq.~\eqref{war:stab:b} reads 
	\begin{equation}\label{war:glob:stab:1}
		\frac{ahz^{1-\frac{1}{h}}}{a-1+z}=1,
	\end{equation}
	where $z$ and $a$ are related by the equality 
	\begin{equation}\label{na:ztoa}
		z^2 + (a(1-h)-2)z +ha z^{1-\frac{1}{h}} +1-a = 0. 
	\end{equation}
	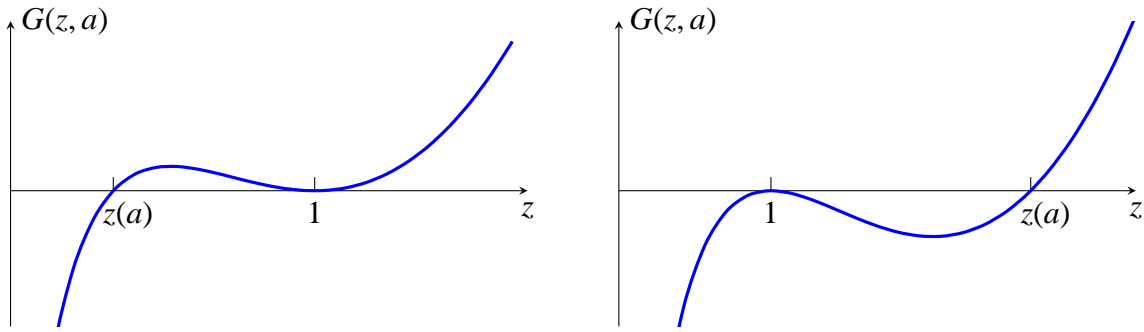
\begin{figure}[!htb]
	\centerline{
	\begin{tikzpicture}[>=stealth]
		\begin{scope}[xscale=4,yscale=9]
			\draw[->] (0,0) -- (1.7,0) node[below] {$z$};
			\draw[->] (0,-0.2) -- (0,0.25) node[right] {$G(z,a)$};
			\draw (1,0) node[below] {$1$} -- +(0,0.02);
			\draw (0.337722,0) node[below right] {\!\!\!\!$z(a)$} -- +(0,0.02);
				\clip (0,-0.2) rectangle (1.7,0.25);
				\draw[blue, very thick, domain=0:1.65,smooth] plot (\x, {\x^2-4.5*\x+5*sqrt(\x) - 1.5});
		\end{scope}
		\begin{scope}[xshift=8cm,xscale=2,yscale=3]
			\draw[->] (0,0) -- (3.4,0) node[below] {$z$};
			\draw[->] (0,-0.6) -- (0,0.75) node[right] {$G(z,a)$};
			\draw (1,0) node[below] {$1$} -- +(0,0.06);
			\draw (2.7085,0) node[below right] {\!\!\!\!$z(a)$} -- +(0,0.06);
				\clip (0,-0.6) rectangle (3.4,0.75);
				\draw[blue, very thick, domain=0:3.4,smooth] plot (\x, {\x^2-9*\x+14*sqrt(\x) - 6});
		\end{scope}
	\end{tikzpicture}
	}
	\caption{The possible graphs of $G(z,a)$ for fixes $a$. In the left-hand side
	panel the case when $z(a)<1$ while the case $z(a)>01$ is presented on the
	right-hand side panel. \label{fig.fun-g}}
	\end{figure}
	First we show the function $z(a)$, where $z$ is the solution to~\eqref{na:ztoa} is 
	a decreasing function of $a$. Let us denote the left-hand side of~\eqref{na:ztoa} as
	$G(z,a)$. Note, that substitutions made does not change number of roots. Thus,  
	due to Proposition~\ref{prop:mal} there exists a~multiple root of~\eqref{na:ztoa} at 
	$1$ and for $a>1$ there exits one more root in $(0,+\infty)$ as $G(0,a)<0$. Moreover, 
	as $G(z,a)\to+\infty$ as $z\to+\infty$, we can deduce that $\partial G(z(a),a)/\partial z>0$ (see Fig.~\ref{fig.fun-g}). 
	The Implicit Function Theorem implies that the sign of $z'(a)$ is reverse to the sign of 
	$\partial G(z,a)/\partial a$ at $z=z(a)$. An easy calculation leads to the following formula
	\[
		\frac{\partial G(z,a)}{\partial a} = z(1-h)+h z^{1-\frac{1}{h}}-1. 
	\]
	As $h>1$ we have $\frac{\partial G(0,a)}{\partial a}<0$, 
	$\frac{\partial G(z,a)}{\partial a}<0$  for $z$ sufficiently large and 
	$\frac{\partial G(1,a)}{\partial a}=0$. Note, 
	\[
		\frac{\partial^2 G(z,a)}{\partial a~\partial z} = (1-h)+(h-1) z^{-\frac{1}{h}}
			= z^{\frac{1}{h}}(h-1)\Bigl(1-z^{\frac{1}{h}}\Bigr).
	\]
	This implies that $\partial G(z,a)/\partial a$ is an increasing function of 
	$z$ for $0<z<1$ and is a~decreasing function of $z$ for $z>1$. This proves 
	$\partial G(z,a)/\partial a<0$ and therefore, we conclude $z'(a)>0$ for all 
	$a$ such that $z(a)\neq 1$. Thus, the function $z(a)$ is increasing and 
	there exists an inverse function $a(z)$. 

	Now, we calculate $a$ as a~function of $z$. From~\eqref{na:ztoa} we have 
	\[
		a = \frac{(z-1)^2}{1+(h-1)z-hz^{1-\frac{1}{h}}}.
	\]
	Plugging this $a$ into~\eqref{war:glob:stab:1} we obtain that the numerator 
	of~\eqref{war:glob:stab:1}	is
	\begin{equation}\label{licznik}
		\frac{h(z-1)^2z^{1-\frac{1}{h}}} {1+(h-1)z-hz^{1-\frac{1}{h}}}
	\end{equation}
	while the denominator reads 
	\begin{equation}\label{mianownik}
		\frac{z^2-2z+1-(1+(h-1)z-hz^{1-\frac{1}{h}})+z(1+(h-1)z-hz^{1-\frac{1}{h}})}{1+(h-1)z-hz^{1-\frac{1}{h}}}
		= \frac{hz(1-z^{-\frac{1}{h}})(z-1)}{1+(h-1)z-hz^{1-\frac{1}{h}}}
	\end{equation}
	Thus, the left-hand side of \eqref{war:glob:stab:1} reads 
	\begin{equation}\label{zixi}
		\frac{h(z-1)^2z^{1-\frac{1}{h}}}{hz(1-z^{-\frac{1}{h}})(z-1)}=
		\frac{z-1}{z^{\frac{1}{h}}-1} 
		= \frac{\xi^h-1}{\xi-1} .
	\end{equation}
	It is easy to see that this expression is an increasing function of $\xi$ and thus of $z$ 
	(the function can be in a~continuous way extended for $z=\xi=1$). Moreover, 
	\eqref{zixi} has limit $1$ at $z=0$ and $+\infty$ as $z\to+\infty$. Therefore, 
	for any $a>1$ there exists a~unique solution to~\eqref{war:glob:stab:1}. 
	This yields that there exists a~unique solution $\bar b$ to~\eqref{war:stab:b} and 
	that left-hand side of~\eqref{war:stab:b} is a~decreasing  function of $b$. 
	
	Due to Proposition~~\ref{prop:mal}, the steady state of Eq.~\eqref{hes1dimles}
	is globally stable for $b<\bar b$. 
	Because $b=k/\bar p$, we deduce if $b\le \bar b$, then
	$k/\bar b <\bar p $, and~\eqref{warnab} follows from the fact that the 
	left-hand side of~\eqref{nastac} is a~decreasing function of $\xi$. 	
\end{proof}

From the proof of Proposition~\ref{prop:mal} we deduce that the left-hand side of~\eqref{na:xodb}
has double root at $\xi=1$. Therefore, for $h=2$ we can easily calculate $\xi_0$ and give 
an explicit formula for~\eqref{warnab}.
\begin{prop}
	For $h=2$ if 
	\begin{equation}\label{warglobh2}
		\frac{k\,k_pk_r}{\alpha\beta} \ge \frac{5\sqrt{5}}{18} \approx 0.6211 ,
	\end{equation}
	then the positive steady state of Eq.~\eqref{hes1dimles} with $f$ given by~\eqref{fun_jen}
	and~\eqref{zamiana} is globally asymptotically stable in $\Cb\Bigl([0,+\infty)^2\bigr)$. 
\end{prop}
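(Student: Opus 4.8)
The plan is to specialise Proposition~\ref{pro:stabb} to the case $h=2$, where every quantity becomes explicit. The key simplification, noted just before the statement, is that for $h=2$ the left-hand side of~\eqref{na:xodb} has a double root at $\xi=1$; dividing it out reduces the quartic to a quadratic, so $\xi_0(b)$ can be written in closed form and the defining relation for $\bar b$ becomes an elementary equation. Once $\bar b$ is known, the threshold is obtained simply by evaluating the right-hand side of~\eqref{warnab}.

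First I would put $h=2$ in~\eqref{na:xodb}, which gives
\[
	\xi^4 - (b^2+3)\xi^2 + 2(b^2+1)\xi - b^2 = 0 .
\]
Factoring out the known double root via $(\xi-1)^2$ leaves the quadratic factor $\xi^2 + 2\xi - b^2$; this is confirmed by matching coefficients, where the $\xi^3$ and constant terms pin down the factor immediately and the remaining two coefficients verify the double root. The unique positive root is then
\[
	\xi_0(b) = \sqrt{1+b^2} - 1 .
\]

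Next I would impose the criticality condition $|f'(\xi_0(b))| = \mu$ that defines $\bar b$ (equivalently~\eqref{war:stab:b}), using the explicit derivative $f'(\xi) = -2\mu(b^2+1)\xi/(b^2+\xi^2)^2$ valid for $h=2$. Writing $s=\sqrt{1+b^2}$, so that $b^2+1=s^2$ and $\xi_0=s-1$, both the numerator $2(b^2+1)\xi_0$ and the quantity $b^2+\xi_0^2$ collapse to multiples of $s(s-1)$, and the condition reduces to $1/\bigl(2(s-1)\bigr)=1$. Hence $s=\tfrac32$, giving $b^2=\tfrac54$ and
\[
	\bar b = \frac{\sqrt5}{2} .
\]
Uniqueness of $\bar b$ and the fact that $b<\bar b$ yields global stability are already established in Proposition~\ref{pro:stabb}, so no further argument is required at this point.

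Finally I would substitute $\bar b=\sqrt5/2$ into~\eqref{warnab}, whose left-hand side for $h=2$ is $\bar b^{3}/(\bar b^{2}+1)$. Using $\bar b^{2}=\tfrac54$ and $\bar b^{3}=\tfrac{5\sqrt5}{8}$ one finds
\[
	\frac{\bar b^{3}}{\bar b^{2}+1} = \frac{5\sqrt5/8}{9/4} = \frac{5\sqrt5}{18},
\]
which is exactly the stated bound, the boundary case being covered by part (iii.b) of Proposition~\ref{prop:mal}. I do not anticipate any genuine obstacle: the whole argument is a direct specialisation of Proposition~\ref{pro:stabb}, and the only care needed is in the coefficient matching for the factorisation and in the cancellation of the $s(s-1)$ factors in the third step.
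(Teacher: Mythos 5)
Your computation follows essentially the same route as the paper's own proof: specialise \eqref{na:xodb} to $h=2$, factor out the double root to get $(\xi-1)^2(\xi^2+2\xi-b^2)=0$ and $\xi_0=\sqrt{1+b^2}-1$, impose the derivative bound $|f'(\xi_0)|\le\mu$, and translate the resulting condition on $b$ into \eqref{warglobh2}. Your algebra is in fact cleaner at two points. First, the substitution $s=\sqrt{1+b^2}$ collapses the criticality condition to $1/\bigl(2(s-1)\bigr)=1$, whereas the paper manipulates the inequality directly into $2-b^2\le 2(b^2-1)\sqrt{b^2+1}$ and then needs a case analysis ($b\ge\sqrt2$, $b\le1$, $1<b<\sqrt2$) plus a squaring step to extract $b\ge\sqrt5/2$. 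Second, you evaluate the threshold as $\bar b^{3}/(\bar b^{2}+1)=5\sqrt5/18$ via \eqref{warnab}, while the paper re-derives it from the monotonicity of the left-hand side of \eqref{nastac}; these are equivalent since $k\,k_pk_r/(\alpha\beta)=b^{h+1}/(b^h+1)$ identically. Note also that by working from $|f'(\xi_0)|=\mu$ with the explicit derivative you silently correct a typo in \eqref{war:stab:b} as printed (the denominator there should be squared); your value $\bar b=\sqrt5/2$ is the one consistent with the paper's $h=2$ computation.

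One step of yours is, however, wrong as stated: the claim that the boundary case $k\,k_pk_r/(\alpha\beta)=5\sqrt5/18$ is ``covered by part (iii.b) of Proposition~\ref{prop:mal}''. Part (iii.b) requires $x_0=x_c$, which (given how $x_0$ is defined) occurs only when $x_c=1$. Here, at $b=\bar b=\sqrt5/2$, the inflection point is $x_c=b/\sqrt3=\sqrt{5/12}\approx 0.645$, while $x_0=\sqrt{1+b^2}-1=1/2$, so $x_0\neq x_c$ and (iii.b) does not apply. Equality in \eqref{warglobh2} corresponds to $|f'(x_0)|=\mu$ exactly, i.e.\ to the tangency case, where (iii.a) demands the \emph{strict} inequality $|f'(x_0)|<\delta_1$ and the underlying Proposition~\ref{prop:model:pathway} fails at $x=x_0$ (the cone bound becomes an equality there). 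To be fair, the paper's own proof has the identical blemish --- it writes $|f'(\xi_0)|\le\mu$ and concludes \eqref{warglobh2} with $\ge$, without any separate treatment of the equality case --- so your proof is exactly as strong as the paper's: airtight for strict inequality in \eqref{warglobh2}, with the boundary case requiring an additional argument that neither you nor the paper supplies; but your specific appeal to (iii.b) should be deleted, as it is verifiably false here.
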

\begin{proof}
	For $h=2$, Eq.~\eqref{na:xodb} reduces to
	\[
		(\xi-1)^2\Bigl(\xi^2+2\xi-b^2\Bigr)=0 \; \Longrightarrow \; \xi_0 = -1+\sqrt{1+b^2},
	\]
	and global stability condition from Proposition~\ref{prop:mal} reads 
	\[
		|f'(\xi_0)|\le \mu \; \Longrightarrow \; 
		\frac{\bigl(b^2+1\bigr)2\xi_0 }{\left(b^2+{\xi_0}^2\right)^2} \le 1
	\]
	Plugging $\xi=  -1+\sqrt{1+b^2}$ into the above inequality, after some calculation 
	we arrive at
	%\[
	%	b^2+2+2\sqrt{b^2+1} \le 2b^2\sqrt{b^2+1}+2b^2
	%\]
	%Thus,
	\begin{equation}\label{h2warbpos1}
		2-b^2 \le 2(b^2-1)\sqrt{b^2+1}
	\end{equation}
	It is easy to see that the inequality~\eqref{h2warbpos1} is true for $b\ge \sqrt{2}$ 
	and it is false for $b\le 1$. Assume then $1<b<\sqrt{2}$. Squaring~\eqref{h2warbpos1},
	and after some easy calculation we deduce that for $1<b<\sqrt{2}$, the inequality
	\eqref{h2warbpos1} is equivalent to 
	\[
		b\ge \frac{\sqrt{5}}{2}.
	\]
	As $\sqrt{5}/2<\sqrt{2}$ we deduce that the inequality~\eqref{h2warbpos1}
	is equivalent to $b\ge \sqrt{5}/2$.  This implies $\bar p \le 2k/\sqrt{5}$.
	Since $\bar p$ is a~solution to~\eqref{nastac}, and left-hand side of~\eqref{nastac} is 
	a decreasing function of $\xi$, we have 
	$\bar p \le 2k/\sqrt{5}$ as long as 
	\[
		\frac{\alpha k^2}{k^2 + \Bigl(\frac{2k}{\sqrt{5}}\Bigr)^2} 
				\ge \frac{2k\,k_pk_r}{\beta\sqrt{5}},
	\]
	which is equivalent to~\eqref{warglobh2}.
\end{proof}

For $h\neq 2$ it is difficult do obtain similar result as for $h=2$. In Fig.~\ref{fig:deponh} 
we illustrated global stability region of the positive steady state of Eq.~\eqref{hes_sys_gen}
with $\tilde f$ given by~\eqref{fun_jen} in dependence on the Hill coefficient $h$. 

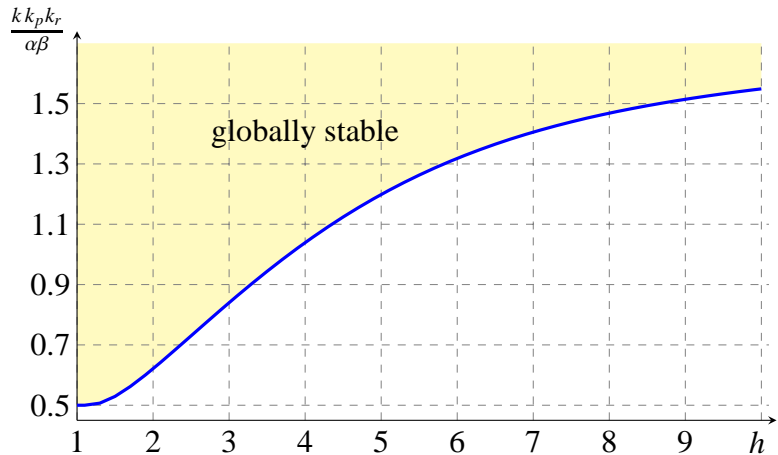
\begin{figure}[!hbt]
\centerline{
\begin{tikzpicture}[>=stealth,yscale=4]
	\def\ogr{(1,0.5) -- (1.0500,0.5000)	 -- (1.0900,0.5000)	 -- (1.1000,0.5000)
	 -- (1.3000,0.5069)	 -- (1.5000,0.5293)	 -- (1.7000,0.5620)	 -- (1.9000,0.6005)	 -- (2.1000,0.6423)
	 -- (2.3000,0.6859)	 -- (2.5000,0.7301)	 -- (2.7000,0.7744)	 -- (2.9000,0.8183)	 -- (3.1000,0.8614)
	 -- (3.3000,0.9033)	 -- (3.5000,0.9440)	 -- (3.7000,0.9832)	 -- (3.9000,1.0209)	 -- (4.1000,1.0569)
	 -- (4.3000,1.0913)	 -- (4.5000,1.1241)	 -- (4.7000,1.1551)	 -- (4.9000,1.1845)	 -- (5.1000,1.2122)
	 -- (5.3000,1.2384)	 -- (5.5000,1.2630)	 -- (5.7000,1.2862)	 -- (5.9000,1.3079)	 -- (6.1000,1.3283)
	 -- (6.3000,1.3474)	 -- (6.5000,1.3654)	 -- (6.7000,1.3822)	 -- (6.9000,1.3979)	 -- (7.1000,1.4126)
	 -- (7.3000,1.4264)	 -- (7.5000,1.4393)	 -- (7.7000,1.4515)	 -- (7.9000,1.4628)	 -- (8.1000,1.4735)
	 -- (8.3000,1.4835)	 -- (8.5000,1.4930)	 -- (8.7000,1.5018)	 -- (8.9000,1.5102)	 -- (9.1000,1.5181)
	 -- (9.3000,1.5256)	 -- (9.5000,1.5327)	 -- (9.7000,1.5394)	 -- (9.9000,1.5458)	 -- (9.9000,1.5458)
	 -- (10,1.5488)}
	\fill[yellow!30!white] \ogr -- (10,1.7) -- (1,1.7) -- cycle;
	\draw[help lines,dashed, ystep=0.2, yshift=0.1cm] (1,0.35) grid (10.1,1.59);
	\draw[->] (1,0.45) -- (10.2,0.45) node[below left] {$h$};
	\draw[->] (1,0.45) -- (1,1.74) node[left] {$\frac{k\,k_pk_r}{\alpha\beta}$};
	\foreach \i in {1,...,9}
		\node at (\i,0.38) {$\i$};
	\foreach \i in {0.5,0.7,0.9,1.1,1.3,1.5}
		\node at (0.65,\i) {$\i$};
	\draw[very thick,blue] \ogr ;
	\node at (4,1.4) {globally stable};
\end{tikzpicture}
}
\caption{The dependance of the critical value of $k\,k_pk_r/(\alpha\beta)$ on  
	Hill coefficient $h$. The region above the curve denotes the global stability 
	region calculated on the basis of Proposition~\ref{pro:stabb}.\label{fig:deponh}}
\end{figure}

\begin{rem}
	For $h=1$ the functions $\tilde f$ given by~\eqref{fun_jen} as well as 
	$f$ given by~\eqref{zamiana}, are convex for all 
	$\xi\ge 0$. From Proposition~\ref{prop:mal} we deduce the slope $\alpha_1$
	of the line that bounds the graph of $f$ from above on $(0,1)$ is equal to
	$f(0)-f(1)=\mu/b$. This yields, that if $b\ge 1$ which is equivalent to  
	$k\,k_pk_r/(\alpha\beta)\ge 1/2$, the positive steady state of Eq.~\eqref{hes1dimles}
	is globally asymptotically stable in $\Cb\Bigl([0,+\infty)^2\bigr)$.
\end{rem}

\section{Discussion} \label{sec:disc}

In this paper, we gave the condition for global stability of zero steady state 
for a~class of delay differential equations. The method used in the paper is based 
on the theory developed by Liz and Ruiz-Herrera~\cite{Liz2013jde}. In this paper we applied 
this method to the 
system of equations of the particular form, that can describe a~cascade of chemical reactions, 
such that $j$-th chemical is produced by $j-1$-th and the last one affects production 
of the first one. This is a~generalisation of the Hes1 gene expression model proposed 
by Monk~\cite{monk03currbiol}. 

We assumed the function $f_j$ can depend on time. In fact, here we can also include the 
dependence of $f_j$ on other variables of the system. If we denote $x=(x_1,\dotsc,x_k)$ 
and $x_t:[-\tau,0]\to\R^k$ be a~function defined by $x_t(s)=x(t+s)$ for $s\in[-\tau,0]$,
we can consider $f_j(t,x_t(s),x_{j-1}(t+s))$ instead of $f_j(t,x_{j-1}(t+s))$. However,
we need then a~very strong assumption of $f_j$, that is~\eqref{warunkig} need to be 
replaced by 
\begin{equation}\label{warunkig1}
	|f_j(t,\varphi,x)| \le \frac{\alpha_j}{\mu_j}|x|, \quad x\in\R, \;\; j=2,3,\dotsc,k, 
			\qquad
	|f_1(t,\varphi,x)| < \frac{\alpha_1}{\mu_1}|x|, \quad x\neq 0, 
\end{equation}
for all $t\ge t_0$, and all $\varphi \in \Cb(\R^k)$. In particular this assumption yields 
$f_j(t,\varphi,0)=0$ for any $\varphi\in\Cb(\R^k)$, which would not be generally true. However, 
if $f_j(t,x_t,x_{j-1}(t)) = \tilde f_j(t,x_{j-1}(t)) \tilde g_j(x_j(t))$, for some 
globally bounded function $\tilde g_j$ and $\tilde f_j$ fulfilling~\eqref{warunkig}, with
$\alpha_j$ replaced by $\tilde\alpha_j$, then~\eqref{warunkig1} is fulfilled with 
$\alpha_j = \tilde\alpha_j \sup |\tilde g(x)|$. 

Here, we give two simple examples. First, we recall global stability result for 
a vector disease model derived by Cooke~\cite{Cooke1979}. Later, we justify, 
that the method derived in this paper cannot be applied to the p53-Mdm2 model proposed 
by Monk~\cite{monk03currbiol}. The model considered in~\cite{Cooke1979} is the following 
\begin{equation}\label{vectord}
	\dot x(t) = bx(t-\tau)\bigl(1-x(t)\bigr)-cx(t).
\end{equation}
In~\cite{Cooke1979}, by constructing Liapunov functionals, it was proved that if $b\le c$ then 
$0$ is globally asymptotically stable in $\Cb([0,+\infty))$ while for $b>c$ the steady state 
$\bar x = 1-c/b$ is globally asymptotically stable in $\Cb((0,+\infty))$. We show, that 
the technique developed here can be applied to~\eqref{vectord}. In fact, in~\cite{Cooke1979}  
it was shown, that the set $\Cb([0,1])$ is absorbing. Therefore, we can estimate
\[
|bx(t-\tau)\bigl(1-x(t)\bigr)| \le b x(t-\tau) \text{CZY TU~NIE POWINNO~BYÆ} b<c???
\]
for any $x(t)\in [0,1]$. Thus, using Theorem~\ref{thm:globalstability} we deduce 
global stability of $0$ in $\Cb([0,1])$ and thus in $\Cb([0,\infty))$. A~similar arguments yield 
global stability of $1-c/b$ for $c<b$. 

On the other hand, the first equation of the p53-Mdm2 model proposed by 
Monk~\cite{monk03currbiol}, after moving the positive steady state $(\bar x, \bar y, \bar z)$ 
to $(0,0,0)$ reads
\[
	\dot x = 1-\mu_2\frac{(z+\bar z)^2}{z_0+(z+\bar z)^2}(x+\bar x)
		-\mu_1\bar x -\mu_1 x,
\]
where $z$ is the third variable of the model. It is easy to see, that for $z\neq 0$ and 
$x=0$, we have 
\[
	1-\mu_2\frac{(z+\bar z)^2}{z_0+(z+\bar z)^2}(x+\bar x)-\mu_1\bar x\neq 0
\]
and therefore~\eqref{warunkig1} cannot be fulfilled.

\section*{Acknowledgements}
The paper was supported by the Polish Ministry of Science and Higher
Education, within the Iuventus Plus Grant: "Mathematical modelling of neoplastic processes"
grant No. IP2011 041971.

\smallskip{}
\noindent
The author whish to thank Prof. Urszula Fory\'s for reading a~draft version and many valuable comments.

%\bibliographystyle{abbrv}
%\bibliography{../literatura/nasze,../literatura/dde,../literatura/biologia,../literatura/biolchem-mat,../literatura/hes}
%\bibliography{nasze,dde,biologia}

\end{document}